\documentclass[12pt,a4paper]{article}
\usepackage{a4wide,amsmath,amsthm,amssymb,amsfonts,color}
\usepackage{mathtools}
\usepackage[backref=page]{hyperref}
\hypersetup{
 colorlinks,
 citecolor=green,
 linkcolor=blue,
 urlcolor=Blue}
\mathtoolsset{showonlyrefs}
\usepackage[all]{xy}
\usepackage[utf8]{inputenc}
\usepackage{enumitem,MnSymbol,authblk}

\usepackage{mathrsfs}
\usepackage{cancel}


\parskip=3pt

\newtheorem{thm}{Theorem}[section]
\newtheorem{prop}[thm]{Proposition}
\newtheorem{lma}[thm]{Lemma}
\newtheorem{cor}[thm]{Corollary}
\newtheorem{dfn}[thm]{Definition}

\theoremstyle{remark}
\newtheorem{rmkk}[thm]{Remark}
\newtheorem{exe}[thm]{Example}
\newenvironment{rmk}{\begin{rmkk}\rm}{\qee\end{rmkk}}

\newcommand{\qee}{\mbox{\hspace{0.2mm}}\hfill$\triangle$}

\newcommand{\C}{\mathbb{C}}
\newcommand{\Q}{\mathbb{Q}}
\newcommand{\Pj}{\mathbb{P}}

\newcommand{\N}{\mathbb{N}}
\newcommand{\cO}{\mathcal{O}}

\newcommand{\codim}{\operatorname{codim}}

\begin{document}
\begin{center}
{\bf \large     On the Hodge conjecture for  quasi-smooth \\[5pt] intersections  in toric  varieties} 

\bigskip
{\sc  Ugo Bruzzo$^\ddag$  and William D. Montoya$^\P$}

\medskip
{\small
$^\ddag$ SISSA (Scuola Internazionale Superiore di Studi Avanzati),  \\ Via Bonomea 265, 34136 Trieste, Italy;   \\[2pt]
Departamento de Matem\'atica, Universidad Federal da Para\'iba, \\ Campus I, Jardim Universit\'ario, 58051-900, Jo\~ao Pessoa, PB, Brazil; \\[2pt]
INFN (Istituto Nazionale di Fisica Nucleare), Sezione di Trieste;\\[2pt] 
IGAP (Institute for Geometry and Physics), Trieste; \\[2pt]
Arnold-Regge Center for Algebra, Geometry \\ and Theoretical Physics, Torino; \\[2pt]
$^\P$ Instituto de Matem\'{a}tica, Estat\'{i}stica e Computa{\c c}\~{a}o Cient\'{i}fica, \\ Universidade Estadual de Campinas,
Rua S\'ergio Buarque de Holanda   651, \\ 
13083-859, Campinas, SP, Brazil
}
\end{center}

\bigskip 

\begin{abstract} 
We establish the Hodge conjecture for some subvarieties of a class of toric varieties. First we study quasi-smooth intersections in a projective simplicial toric variety, which is a suitable notion to generalize smooth complete intersection subvarieties in the toric environment, and in particular quasi-smooth hypersurfaces. We show that under appropriate conditions, the Hodge Conjecture holds for  a very general quasi-smooth intersection subvariety, generalizing the work on quasi-smooth hypersurfaces of the first author and Grassi in \cite{bg2}. We also show that the Hodge Conjecture  holds asymptotically for suitable quasi-smooth hypersurface in the Noether-Lefschetz locus, where ``asymptotically'' means
that the degree of the hypersurface is big enough. This extendes to toric varieties  Otwinowska's result  in \cite{Annia}.\end{abstract}
    
 \vfill
 
 \noindent  \parbox{.6\textwidth}{\hrulefill} \\
{\footnotesize Date: Revised \today  \\[-3pt]
{\em 2010 Mathematics Subject Classification:}   14C22, 14J70, 14M25 \\[-3pt]
{\em Keywords:} Noether-Lefschetz theory, Hodge Conjecture, toric varieties \\[-3pt]
Email: {\tt  bruzzo@sissa.it, montoya@unicamp.br }
}

\newpage

\section{Introduction}
A projective simplicial toric variety $\Pj^d_{\Sigma}$ satisfies the Hodge Conjecture, i.e., every cohomology class in $H^{p,p}(\Pj^{d}_{\Sigma},\Q)$ is a linear combination of algebraic cycles. On the one hand, by the Lefschetz hyperplane theorem, the Hodge conjecture holds true for every hypersurface and $p<\frac{d-1}{2}$ and by the hyperplane Lefschetz theorem, and by Poincar\'e duality, also for $p>\frac{d-1}{2}$. Moreover, by Theorem 1.1 in \cite{bg2}, when $p=\frac{d-1}{2}$, $d=2k+1$ and $\Pj_{\Sigma}^{2k+1}$ is an Oda variety with an ample class $\beta$ such that  $k\beta-\beta_0$ is nef, where $\beta_0$ is the anticanonical class, the Hodge conjecture with rational coefficients holds for a very general hypersurface in the linear system $|\beta|$. 

The notion of Oda varieties was introduced in \cite{bg-nl}. Let us recall that the Cox ring of a toric variety $\Pj_\Sigma$ is graded
over the class group $\operatorname{Cl}(\Pj_\Sigma)$, and that one has an injection $\operatorname{Pic}(\Pj_\Sigma) \to\operatorname{Cl}(\Pj_\Sigma)$.

\begin{dfn}\label{A1}  Let $\Pj_\Sigma$ be a toric variety with Cox ring $S$. $\Pj_\Sigma$ is said to be     an Oda variety if the multiplication morphism $S_{\alpha_1 }\otimes S_{\alpha_2} \to S_{\alpha_1+\alpha_2}$
is surjective whenever the classes $\alpha_1$ and $\alpha_2$ in $\operatorname{Pic}(\Pj_\Sigma)$ are
ample and nef, respectively.
\end{dfn}
 
In \cite{Mavlyutov} Mavlyutov proved a Lefschetz type theorem for quasi-smooth intersection subvarieties, and moreover  using  the ``Cayley trick''  he related the cohomology of a quasi-smooth subavariety $X=X_{f_1}\cap \dots \cap X_{f_s}\subset \Pj^{d}_{\Sigma}$ to the cohomology of a quasi-smooth hypersurface $Y\subset \Pj^{d+s-1}_{\Sigma}$.  This allows us to prove a Noether-Lefschetz type theorem, namely:

\noindent \textbf{Theorem \ref{NL}.}{\em Let $\Pj^d_{\Sigma}$ be an Oda projective simplicial toric variety. For a very general quasi-smooth intersection subvariety $X$ cut off by $f_1,\dots f_s$ such that $d+s=2(\ell +1)$ and $$\sum_{i=1}^s \deg (f_i)- \beta_0$$ is nef,  one has 
$$H^{\ell +1-s,\ell +1-s}(X,\Q)=i^*\left( H^{\ell +1-s,\ell +1-s}(\Pj^d_{\Sigma},\Q)\right).$$ }

From this one obtains the  following result about the Hodge conjecture for quasi-smooth intersections.

\noindent \textbf{Corollary \ref{hodge1}.} {\em  If  $\Pj_{\Sigma}^d$ is an Oda projective simplicial toric variety,
the Hodge Conjecture holds for a very general  quasi-smooth intersection subvariety $X$ cut off by $f_1,\dots f_s$ such that $d+s$ is even  and  $\sum_{i=1}^s \deg (f_i)- \beta_0$ is nef.}

Let $T$ be the open subset of $\vert\beta\vert$ corresponding to quasi-smooth hypersurfaces, and let $\mathcal{H}^{2k}=R^{2k}\pi_\ast\C\otimes_\C\cO_T$ be
the Hodge bundle on $T$; here $\pi\colon\mathcal X \to T$ is the tautological family on $T$, and $d=2k+1$. 
We restrict $\mathcal H^{2k}$ to a contractible open subset $U\subset T$. The bundle $\mathcal{H}^{2k}$
has a Hodge decomposition
$$\mathcal{H}^{2k} = \bigoplus_{p+q=2k}\mathcal H^{p,q}$$
but this is not holomorphic. On the other hand, the bundles that make up the Hodge filtration
$$F^p\mathcal{H}^{2k} = \bigoplus_{p=0}^{2k}\mathcal H^{2k-p,p}$$
are holomorphic; to see this one can use the {\em period map} (which in particular we write for $p=k$)
$$\mathcal P^{k,2k}\colon U \to \operatorname{Grass}(b_k,H^{2k}(X_{u_0},\C))$$
where $b_k = \dim F^kH^{2k}(X_{u_0},\C)$ for a fixed point $u_0\in U$; this map sends $f\in U$ to the subspace $F^kH^{2k}(X_f,\C) \subset H^{2k}(X_f,\C) = H^{2k}(X_{u_0},\C)$.
This map is holomorphic (see \cite{LiuZhuang} and \cite[Prop.~3.4]{bm1}). But, by the very definition of the period map (see also \cite{v1}, Section 10.2.1 for the smooth case)
$$ F^k\mathcal{H}^{2k} \simeq (\mathcal P^{k,2k})^\ast \mathcal U_k,$$
where $\mathcal U_k$ is the tautological bundle on the Grassmannian $\operatorname{Grass}(b_k,H^{2k}(X_{u_0},\C))$, so that the bundles $F^k\mathcal{H}^{2k} $
are indeed holomorphic.  

Pushing ahead the ideas developed in \cite{bm1} and  \cite{bm}, let  $\lambda_f$ be a nonzero class
in the primitive cohomology $H^{k,k}(X_f,\Q)/H^{k,k}(\Pj^{2k+1}_{\Sigma},\Q)$,  and let $U$ be a contractible open subset of $T$ around $f$, so that $\mathcal{H}^{2k}_{\vert U}$ is constant. Moreover, let $\lambda\in \mathcal{H}^{2k}(U)$ be the section defined by $\lambda_f$ and let $\bar{\lambda}$ be  its image in $(\mathcal{H}^{2k}/F^k\mathcal{H}^{2k})(U)$.  One has

\begin{prop} The local Noether-Lefschetz loci can be defined as \label{lnl} $$N^{k,\beta}_{\lambda,U}:=\{G\in U \mid \bar{\lambda}_{G}=0\}$$ 
where $\beta=\deg (f)$. \end{prop}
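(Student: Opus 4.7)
The plan is to verify that the vanishing of $\bar{\lambda}$ reproduces the classical notion of Noether--Lefschetz locus at $f$: the sublocus of $U$ where the flat transport $\lambda_G$ of $\lambda_f$ remains a Hodge class of type $(k,k)$ in the primitive middle cohomology of $X_G$.

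First I would use the short exact sequence of holomorphic bundles on $U$,
$$ 0 \to F^k\mathcal{H}^{2k} \to \mathcal{H}^{2k} \to \mathcal{H}^{2k}/F^k\mathcal{H}^{2k} \to 0, $$
to restate the condition $\bar{\lambda}_G = 0$ as the membership $\lambda_G \in F^kH^{2k}(X_G,\C)$; this is just the fiberwise exactness of the quotient map.

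Second, I would invoke the reality of $\lambda$. Since $U$ is contractible, the Gauss--Manin connection trivializes $\mathcal{H}^{2k}|_U$, and the flat extension $\lambda$ of the rational class $\lambda_f$ is real at every point $G \in U$. Combining $\lambda_G \in F^k$ with its complex conjugate $\lambda_G \in \overline{F^k}$, and using the Hodge symmetry $F^k \cap \overline{F^k} = H^{k,k}(X_G,\C)$ in the middle cohomology, one concludes that $\lambda_G$ is of pure type $(k,k)$, which is precisely the Hodge condition defining membership in the Noether--Lefschetz locus.

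Finally, to descend to the primitive quotient, I would observe that $i^*H^{k,k}(\Pj^{2k+1}_\Sigma,\Q)$ sits inside $H^{2k}(X_G,\Q)$ as a constant sub-Hodge structure pulled back from the ambient toric variety along the inclusion $X_G \hookrightarrow \Pj^{2k+1}_\Sigma$, so quotienting by it is strictly compatible with the filtration $F^{\bullet}$ and with complex conjugation; the equivalence therefore descends unchanged to the primitive quotient, giving the claimed description of $N^{k,\beta}_{\lambda,U}$. The main technical point I expect to need care is precisely this strict compatibility and the flatness of the ambient sub-Hodge structure under Gauss--Manin; once that is in place, the proposition reduces to the two elementary linear-algebra steps above.
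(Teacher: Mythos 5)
The paper does not actually spell out a proof of this proposition: it is phrased as a definition-proposition ("can be defined as"), and the justification is left implicit in the surrounding discussion of the holomorphicity of $F^k\mathcal{H}^{2k}$ via the period map. Your three-step argument supplies exactly the missing verification, and it is the standard one (cf.\ Voisin's treatment in the smooth case): (i) fiberwise exactness of $0\to F^k\mathcal{H}^{2k}\to\mathcal{H}^{2k}\to\mathcal{H}^{2k}/F^k\mathcal{H}^{2k}\to 0$ translates $\bar\lambda_G=0$ into $\lambda_G\in F^kH^{2k}(X_G,\C)$; (ii) since $\lambda$ is the flat extension of a rational class, $\lambda_G$ is real, and $F^k\cap\overline{F^k}=H^{k,k}$ in weight $2k$ then forces $\lambda_G$ to be of type $(k,k)$; (iii) the constant sub-Hodge structure $i^*H^{k,k}(\Pj^{2k+1}_\Sigma)$ is flat under Gauss--Manin and strictly compatible with $F^\bullet$, so the equivalence descends to the primitive quotient. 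This is correct and consistent with what the paper is tacitly assuming. The only remark worth adding is that in this quasi-smooth (orbifold) setting one also needs the period map to be holomorphic and the variation to carry a polarized rational Hodge structure, which the paper secures by the citations to \cite{LiuZhuang} and \cite{bm1}; your argument silently uses this, and it would be cleaner to flag that dependence explicitly rather than treat it as classical.
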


The following result is  Theorem 1.2 in  \cite{bm}.

\noindent\textbf{Theorem.}  {\em Let $\Pj^{2k+1}_\Sigma$ be an Oda variety with an ample class $\beta$
such that $k\beta-\beta_0=n\eta$, where $\beta_0$ is the anticanonical class, $\eta$ is a primitive ample class,
and   $n\in \N$.  Let
\begin{equation}\label{mbeta} m_\beta = \max \{i\in\N\,\,\vert\,i\eta \le \beta \}. \end{equation}
For every positive $\epsilon$ there is a positive $\delta$
such that for every $m\ge \max(\frac1\delta,m_\beta)$ and $d\in[1,m\delta]$,  and every nontrivial Hodge class $\lambda\in F^k\mathcal H^{2k}(U)$ such that
$$\codim N^{k,\beta}_{\lambda,U} \leq d\frac{m_{\beta}^k}{k!},$$     for every $f\in N_{\lambda,U}^{k,\beta}$,  there exists a $k$-dimensional variety $V\subset X_f$ with $\deg V\leq d$.
Here $\deg V$ and $\deg X_f$ are taken with respect to the ample divisor $\eta$, i.e.,
$$ \deg V = [V]\cdot\eta^k,\qquad \deg X_f = [X_f]\cdot \eta.$$ }

Based on this, in this paper we obtain the following result.

\noindent\textbf{Theorem \ref{hodge2}.}  {\em 
Under the same hypotheses of the previous theorem, 
if $V\subset X_f$ is a nonempty quasi-smooth   intersection subvariety of $\Pj^{2k+1}$ for some $f\in N_{\lambda,U}^{k,\beta}$,   then there exists $c\in \Q^*$ such that $\lambda_f=c\lambda_V$,
where $\lambda_V$ is the class of $V$ in $H_{\mbox{\footnotesize prim}}^{k,k}(X_f,\Q)$.}

In other words, $\lambda_f$ is algebraic.

In his paper \cite{Dan} A.~Dan proves a form of  our Theorem \ref{hodge2} for smooth hypersurfaces in   odd-dimensional projective spaces
$\mathbb P^{2k+1}$ which is not asymptotic. So our result is more general  in two ways, as we consider {\em quasi-smooth intersections} in {\em toric varieties;} however, our result is asymptotic.
 
 \medskip
\textbf{Acknowledgement.}
We thank Paolo Aluffi  for useful discussions, and Antonella Grassi for developing with the first author the foundations on which this work is based. We are very thankful to the referee
for her/his very careful reading, and the many suggestions and remarks which allowed us to greatly improve the presentation of this paper. The
second author acknowledges support from FAPESP postdoctoral grant No.~2019/23499-7.

\section{Very general quasi-smooth intersections}

Let $f_1,\dots,f_s$  be  weighted homogeneous polynomials in the Cox ring $S = \C[x_1,\dots,x_n]$ of $\Pj_{\Sigma}^d$.  Their zero locus $V(f_1,\dots,f_s)$  defines a closed subvariety $X\subset \Pj^{d}_{\Sigma}$. Let $U(\Sigma)= \mathbb A^n -Z(\Sigma)$, where $Z(\Sigma)$ is the irrelevant locus, i.e., $Z(\Sigma) = \operatorname{Spec}B$, where 
$B$ is the irrelevant ideal.

\begin{dfn} {\rm \cite{Mavlyutov}}  $X$ is a codimension $s$ quasi-smooth intersection if $V(f_1,\dots,f_s)\cap U(\Sigma)$ is either empty or a smooth   interesection subvariety of codimension $s$ in $U(\Sigma)$.
\end{dfn}
This notion generalizes that of smooth complete intersection in a projective space.  {For $s=1$ it reduces to the notion of {\em quasi-smoooth hypersurface,}
see Def.~3.1 in \cite{bc}. If we regard $\Pj_{\Sigma}^d$ as an orbifold, then a hypersurface $X$ is quasi-smooth when it is a sub-orbifold of $\Pj_{\Sigma}^d$;
heuristically, ``$X$ has only singularities coming from the ambient variety.''}

%
%

We also have a Lefschetz type theorem in this context.

\begin{prop}[\cite{Mavlyutov} Proposition 1.4] Let $X\subset \Pj_{\Sigma}^d$ be a closed subset, defined by homogeneous polynomials $f_1,\dots f_s\in B$. Then the natural map
$i^{*}: H^i(\Pj_{\Sigma}^d)\rightarrow H^i(X)$ is an isomorphism for $i<d-s$ and an injection for $i=d-s$.
In particular, this is true if  the hypersurfaces cut by the polynomials $f_i$ are ample.

\end{prop}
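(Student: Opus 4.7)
The plan is to proceed by induction on the codimension $s$, reducing to the case of a single quasi-smooth hypersurface and then leveraging an orbifold version of the classical Lefschetz hyperplane theorem.

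For the base case $s=1$, the statement asserts that $i^*\colon H^i(\Pj^d_\Sigma)\to H^i(X_{f_1})$ is an isomorphism for $i<d-1$ and injective for $i=d-1$. Since $\Pj^d_\Sigma$ is simplicial, it is an orbifold, and a quasi-smooth hypersurface $X_{f_1}$ is a suborbifold; passing to a common resolution (or working with orbifold/intersection cohomology directly) one recovers Poincar\'e duality and the standard Lefschetz argument. Concretely, when $f_1$ cuts an ample divisor, the complement $\Pj^d_\Sigma\setminus X_{f_1}$ is affine, so Artin's vanishing theorem bounds its cohomological dimension by $d$; then the long exact sequence of the pair $(\Pj^d_\Sigma,\Pj^d_\Sigma\setminus X_{f_1})$ combined with Poincar\'e--Lefschetz duality on the orbifold yields the desired range.

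For the inductive step, assume the result for codimension $s-1$, and set $X' = V(f_1,\dots,f_{s-1})$ and $X = X'\cap V(f_s)$. By the quasi-smoothness hypothesis, $X'$ is itself a codimension-$(s-1)$ quasi-smooth intersection (hence an orbifold of dimension $d-s+1$), and $f_s$ restricts to cut a quasi-smooth hypersurface $X\subset X'$. Applying the base case to the embedding $X\hookrightarrow X'$ gives that $H^i(X')\to H^i(X)$ is an isomorphism for $i<d-s$ and an injection for $i=d-s$. Composing with the inductive statement $H^i(\Pj^d_\Sigma)\xrightarrow{\sim}H^i(X')$ in the same range finishes the proof. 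The final sentence follows because ampleness ensures the relevant divisors satisfy the affineness/positivity needed in the base case.

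The main obstacle is the base case: transferring the Lefschetz theorem from the smooth projective setting to the orbifold setting where both $\Pj^d_\Sigma$ and $X_{f_1}$ may have quotient singularities. A cleaner alternative, which in fact is the one adopted in \cite{Mavlyutov}, is to bypass the induction by using the Cayley trick: the complete intersection $X=X_{f_1}\cap\cdots\cap X_{f_s}\subset\Pj^d_\Sigma$ corresponds to a single quasi-smooth hypersurface $Y$ in a toric projective bundle $\Pj^{d+s-1}_{\Sigma'}$ of dimension $d+s-1$, and the Cayley isomorphism relates $H^\bullet(X)$ to $H^\bullet(Y)$ in the appropriate range. One then only needs the base case Lefschetz theorem for hypersurfaces, applied on the larger toric variety, and transports the resulting isomorphism/injectivity back to $X$.
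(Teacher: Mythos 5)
The paper does not prove this statement: it is Proposition 1.4 of \cite{Mavlyutov}, cited without proof, so there is no internal argument to compare against. Assessing your proposal on its own terms, the inductive step has a genuine gap. The hypotheses do not include quasi-smoothness---$X$ is merely a closed subset cut out by $s$ polynomials in the irrelevant ideal $B$, and its codimension may well be strictly less than $s$. Your induction nonetheless invokes a ``quasi-smoothness hypothesis'' to conclude that $X'=V(f_1,\dots,f_{s-1})$ is a codimension-$(s-1)$ quasi-smooth suborbifold and that $f_s$ cuts a quasi-smooth hypersurface of it. Neither is assumed, and neither is automatic even when the full intersection $V(f_1,\dots,f_s)$ happens to be quasi-smooth: the partial intersections need not be quasi-smooth, nor of pure codimension, so there is no orbifold $X'$ to which your base-case Lefschetz argument applies, and the induction cannot get off the ground.

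Your aside that \cite{Mavlyutov} proves this via the Cayley trick is also not accurate; there the Cayley trick is the engine behind computing the \emph{Hodge structure} of a complete intersection (as recalled in Proposition~\ref{prop} of this paper), not behind the Lefschetz-type statement. The latter is proved directly, in the stated generality and without inducting through slices: because $f_i\in B$, each complement $\Pj^d_\Sigma\setminus V(f_i)$ is affine (it is the quotient of the basic open $D(f_i)\subset\A^n$ by the Cox torus), hence of cohomological dimension at most $d$; the union $\Pj^d_\Sigma\setminus X$ is thus covered by $s$ affine opens, and a Mayer--Vietoris spectral sequence bounds its cohomological dimension by $d+s-1$; combining the long exact sequence of the pair $(\Pj^d_\Sigma,\Pj^d_\Sigma\setminus X)$ with Poincar\'e duality for the $d$-dimensional simplicial (hence $\Q$-homology manifold) open $\Pj^d_\Sigma\setminus X$ yields the claimed isomorphism for $i<d-s$ and injectivity for $i=d-s$. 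This imposes no condition on the scheme structure or codimension of $X$, matching the generality of the statement. Your base case for $s=1$ is exactly this computation; the repair is to run the same cohomological-dimension argument directly for general $s$ rather than hypersurface by hypersurface.
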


Hence if $p\neq \frac{d-s}{2}$ every cohomology class in $H^{p,p}(X)$ is a linear combination of algebraic cycles. So let us see what happens when $p=\frac{d-s}{2}$. The idea   is to relate the Hodge structure of a quasi-smooth intersection  variety $X=X_{f_1}\cap \dots \cap X_{f_s}$ in $\Pj^d_{\Sigma}$ with the Hodge structure of a quasi-smooth hypersurface $Y$ in a toric variety $\Pj^{d+s-1}_{X,\Sigma}$ whose fan depends on $X$ and $\Sigma$.

\begin{prop}\label{prop}  Let $X=X_1 \cap \dots\cap X_s$ be quasi-smooth intersection subvariety in $\Pj_{\Sigma}^d$ cut off by homogeneous polynomials $f_1\dots f_s$.   There exists a projective simplicial toric variety $\Pj^{d+s-1}_{X,\Sigma}$ and a quasi-smooth hypersurface $Y\subset \Pj^{d+s-1}_{X,\Sigma}$ such that for  $p\neq \frac{d+s-1}{2}, \frac{d+s-3}{2} $
$$H^{p-1,d+s-1-p}_{\mbox{\rm\footnotesize prim}}(Y)\simeq H^{p-s,d-p}_{\mbox{\rm\footnotesize prim}}(X). $$
\end{prop}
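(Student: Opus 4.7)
The plan is to implement, in the toric setting, the \emph{Cayley trick}, following Mavlyutov's construction cited above. First, I would build the ambient variety $\Pj^{d+s-1}_{X,\Sigma}$ as the projectivization of the bundle $\cO(-\beta_1)\oplus\cdots\oplus\cO(-\beta_s)$ over $\Pj^d_\Sigma$, where $\beta_i=\deg(f_i)\in\Pic(\Pj^d_\Sigma)$. At the level of fans this amounts to enlarging $\Sigma$ by $s$ new rays encoding the twists by the $\beta_i$, together with the natural cones dictated by the fiber fan of $\Pj^{s-1}$; simpliciality and projectivity are then inherited from $\Sigma$. The resulting Cox ring is $S[y_1,\ldots,y_s]$, with the $y_i$ carrying gradings chosen so that the polynomial
$$F := y_1 f_1+\cdots+y_s f_s$$
is homogeneous of a single class $\alpha$, and I would set $Y:=V(F)\subset \Pj^{d+s-1}_{X,\Sigma}$.

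Next, I would verify that $Y$ is quasi-smooth. A singular point $(x,y)$ of $Y$ lying in the new irrelevant complement $U(\Sigma')$ would have to satisfy $\partial F/\partial y_i = f_i(x)=0$ for every $i$, so $x$ lies over $X$; the remaining equations $\partial F/\partial x_j=\sum_i y_i\,\partial f_i/\partial x_j = 0$, combined with the maximal-rank Jacobian condition for $(f_1,\ldots,f_s)$ coming from quasi-smoothness of $X$, then force $y_1=\cdots=y_s=0$, contradicting $(x,y)\in U(\Sigma')$.

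Finally, I would invoke the cohomological half of the Cayley trick. The complement $\Pj^{d+s-1}_{X,\Sigma}\setminus Y$ fibers over $\Pj^d_\Sigma\setminus X$ with affine fibers $\A^{s-1}$, and Mavlyutov's toric-residue description of the primitive cohomology of a quasi-smooth hypersurface translates this geometric fibration into the claimed isomorphism
$$H^{p-1,d+s-1-p}_{\mbox{\rm\footnotesize prim}}(Y)\simeq H^{p-s,d-p}_{\mbox{\rm\footnotesize prim}}(X),$$
the $(s-1,s-1)$ bi-shift arising from the fiberwise Leray contribution. The two excluded values $p=\tfrac{d+s-1}{2}$ and $p=\tfrac{d+s-3}{2}$ are precisely the bidegrees in which classes pulled back from the ambient $\Pj^{d+s-1}_{X,\Sigma}$ intervene and have to be separated from the primitive part. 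The hard step is exactly this last bookkeeping: identifying, on the residue side, which graded pieces of the Jacobian ring of $F$ correspond to primitive classes of $X$ under the Cayley correspondence, and confirming that the excluded bidegrees are exactly the two listed. Since this is worked out in Mavlyutov's paper, I would cite it directly rather than rebuild the spectral-sequence argument by hand.
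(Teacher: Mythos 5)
Your proposal is essentially the paper's proof: same Cayley-trick construction of $\Pj^{d+s-1}_{X,\Sigma}$ as a projective bundle over $\Pj^d_\Sigma$ with Cox ring $S[y_1,\dots,y_s]$, same hypersurface $Y=V(y_1f_1+\dots+y_sf_s)$, and the same reliance on Mavlyutov for the cohomological identification (the paper chains Theorem 10.13 of Batyrev--Cox with Theorem 3.6 of Mavlyutov through the Jacobian ring $R(F)_{(d+s-p)\beta-\beta_0}$, and cites Mavlyutov's Lemma 2.2 for quasi-smoothness of $Y$ where you instead supply the short direct Jacobian argument). The only cosmetic difference is the sign/duality convention on the twisting bundle ($\Pj(E)$ with $E=L_1\oplus\dots\oplus L_s$ versus your $\cO(-\beta_1)\oplus\dots\oplus\cO(-\beta_s)$), which is a matter of the sub-versus-quotient convention for projectivization and does not affect the argument.
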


\begin{proof} One constructs $\Pj^{d+s-1}_{X,\Sigma}$ via the so-called ``Cayley trick". Let $L_1,\dots, L_s$ be the line bundles associated to the quasi-smooth hypersurfaces $X_1,\dots X_s$, and  so let $\Pj(E)$ be the projective bundle  of  $E=L_1\oplus \dots \oplus L_s$. It turns out that $\Pj(E)$ is a $d+s-1$- dimensional projective simplicial toric variety whose Cox ring  is 
$$\C[x_1,\dots,x_n,y_1,\dots y_s] $$  where $S=\C[x_1,\dots, x_n]$ is the Cox ring of $\Pj^{d}_{\Sigma}$. The  hypersurface $Y$ is cut off by the polynomial $F=y_1f_1+\dots+ y_sf_s$ and is quasi-smooth by Lemma 2.2 in \cite{Mavlyutov}. Moreover, combining Theorem 10.13 in \cite{bc} and Theorem 3.6 in \cite{Mavlyutov}, we have that 
$${H^{p-1,d+s-1-p}_{\mbox{\rm\footnotesize prim}}(Y)}\simeq { R(F)_{(d+s-p)\beta-\beta_0}} \simeq  H^{p-s,d-p}_{\mbox{\rm\footnotesize prim}}(X)$$
for  $p\neq \frac{d+s-1}{2}, \frac{d+s-3}{2} $
 as desired. \end{proof}

 Here $R(F)$ is the Jacobian ring of $Y$, i.e., the quotient of the Cox ring $$R(F) = \C[x_1,\dots,x_n,y_1,\dots y_s]/J(F),$$ where $J(F)$ is    the ideal generated by the derivatives of $F$, see \cite{bc}.


\begin{rmk} With the same notation of Proposition \ref{prop}, note that we have a well defined map
$$ 
\begin{array}{rcl}
\phi: |\beta_1|\times \dots \times |\beta_s| &\rightarrow&|\beta |\\
(f_1,\dots,f_s)&\mapsto & f_1y_1+\dots+f_sy_s.
\end{array}
$$
Moreover, by the Noether-Lefschetz theorem $  NL_{\beta}$ is a countable union of closed sets $\bigcup_i {C_i}$ and hence $\bigcup \phi^{-1}(C_i)$ is too.  
\end{rmk}
We have a Noether-Lefschetz type theorem, namely,

\begin{thm}\label{NL} Let $\Pj^d_{\Sigma}$ be an Oda projective simplicial toric variety. Then for a very general quasi-smooth intersection subvariety $X$ cut off by $f_1,\dots f_s$ such that $d+s=2(k+1)$ and $\sum_{i=1}^s \deg (f_i)-\beta_0$ is nef, one has that
$$H^{k+1-s,k+1-s}(X,\Q)=i^*\left( H^{k+1-s,k+1-s}(\Pj^d_{\Sigma},\Q)\right)$$ 

\end{thm}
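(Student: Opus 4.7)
The strategy is to reduce the theorem via the Cayley trick of
Proposition~\ref{prop} to \cite[Thm.~1.1]{bg2}, the Noether--Lefschetz
theorem for quasi-smooth hypersurfaces in Oda toric varieties. Since
$d+s=2(k+1)$, the ambient variety $\Pj^{d+s-1}_{X,\Sigma}$ has odd
dimension $2k+1$, and the hypersurface $Y\subset \Pj^{d+s-1}_{X,\Sigma}$
cut out by $F=y_1f_1+\cdots+y_sf_s$ has middle primitive cohomology
in bidegree $(k,k)$. Setting $p=k+1$ in the isomorphism of
Proposition~\ref{prop} yields
\[
H^{k,k}_{\mbox{\rm\footnotesize prim}}(Y)\simeq
H^{k+1-s,k+1-s}_{\mbox{\rm\footnotesize prim}}(X),
\]
the excluded values $(d+s-1)/2$ and $(d+s-3)/2$ being non-integers
and thus imposing no restriction. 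So it suffices to establish
$H^{k,k}_{\mbox{\rm\footnotesize prim}}(Y_F,\Q)=0$ for $Y_F$ of the
special form produced by the Cayley construction for very general
$f_i$.

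To apply \cite[Thm.~1.1]{bg2} to $Y\subset \Pj^{d+s-1}_{X,\Sigma}$, I
would verify its two structural hypotheses on the projective bundle
$\Pj(L_1\oplus\cdots\oplus L_s)$. First, the Oda property should be
deduced from the Oda property of $\Pj^d_\Sigma$ using the polynomial
structure of the Cox ring $\C[x_1,\dots,x_n,y_1,\dots,y_s]$:
multiplication of Cox components decomposes into multiplication in
the $x$-variables (where Oda applies) and in the $y$-variables (which
is free). Second, writing $\tilde\beta=[Y]$ and
$\tilde\beta_0=-K_{\Pj^{d+s-1}_{X,\Sigma}}$, a direct computation with
the canonical class of a projective bundle should reduce the nef
condition on $k\tilde\beta-\tilde\beta_0$ to the given hypothesis
that $\sum_{i=1}^s\deg(f_i)-\beta_0$ is nef. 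Granting these
verifications, \cite[Thm.~1.1]{bg2} yields the vanishing of
$H^{k,k}_{\mbox{\rm\footnotesize prim}}(Y_F,\Q)$ for $F$ outside a
Noether--Lefschetz locus $NL_{\tilde\beta}=\bigcup_i C_i\subset
|\tilde\beta|$, which is a countable union of proper closed subsets.
The final step is to transfer genericity back along the map $\phi$
of the Remark: $\phi^{-1}(NL_{\tilde\beta})=\bigcup_i\phi^{-1}(C_i)$
is a countable union of closed subsets of
$|\beta_1|\times\cdots\times|\beta_s|$, and any $(f_1,\dots,f_s)$ in
its complement gives a quasi-smooth intersection $X$ satisfying
$H^{k+1-s,k+1-s}_{\mbox{\rm\footnotesize prim}}(X,\Q)=0$; combined
with the Lefschetz-type injection
$i^*\colon H^{k+1-s,k+1-s}(\Pj^d_\Sigma,\Q)\hookrightarrow
H^{k+1-s,k+1-s}(X,\Q)$ this establishes the claimed equality.

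The main obstacle is verifying that each $\phi^{-1}(C_i)$ is a
\emph{proper} closed subset, i.e., that the image of $\phi$ is not
entirely contained in any irreducible component of the
Noether--Lefschetz locus. This is a genuine genericity statement; it
should be settled either by adapting the infinitesimal
variation-of-Hodge-structure argument of \cite{bg2} to the sub-family
cut out by $\phi$, or by exhibiting a single $(f_1,\dots,f_s)$ for
which the corresponding Jacobian-ring component
$R(F)_{(k+1)\tilde\beta-\tilde\beta_0}$ vanishes. Either route
reduces to a surjectivity statement for multiplication in the
bigraded Cox ring, which is where the Oda hypothesis re-enters.
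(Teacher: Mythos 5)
Your overall strategy matches the paper's: pass through the Cayley trick to a hypersurface $Y\subset\Pj^{d+s-1}_{X,\Sigma}$, apply the Noether--Lefschetz theorem for quasi-smooth hypersurfaces in Oda varieties, and transfer genericity back via the map $\phi$. Two points of divergence.

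For the Oda property of $\Pj^{d+s-1}_{X,\Sigma}=\Pj(L_1\oplus\cdots\oplus L_s)$, you propose a direct argument using the splitting of the Cox ring into $x$- and $y$-variables. This heuristic needs care: the Picard group of $\Pj(E)$ mixes the base and the tautological class, so an ample class on $\Pj(E)$ does not simply decompose into an ample class on $\Pj_\Sigma$ times something free in $y$. The paper sidesteps this entirely by quoting Theorem~4.1 of Ikeda~\cite{Ikeda}, which shows that a projective bundle over an Oda toric variety is Oda; this is the clean route and is what the proof of Corollary~\ref{hodge1} actually invokes.

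Your ``main obstacle'' --- that $\phi^{-1}(C_i)$ might fail to be a proper closed subset, i.e., that the image of $\phi$ could lie inside a Noether--Lefschetz component --- is not in fact an obstacle. In the class group of $\Pj(E)\simeq\Pj^{d+s-1}_{X,\Sigma}$, one has $\operatorname{Cl}(\Pj(E))\simeq\operatorname{Cl}(\Pj_\Sigma)\oplus\Z$ with $\deg y_i=(-\beta_i,1)$ and $\deg x_\rho=(\deg_\Sigma x_\rho,0)$; hence $\tilde\beta=\deg(y_if_i)=(0,1)$, and the degree-$\tilde\beta$ graded piece of the Cox ring of $\Pj(E)$ is exactly $\bigoplus_{j=1}^{s}y_jS^{\beta_j}$. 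Consequently every section of $\cO_{\Pj(E)}(\tilde\beta)$ has the form $\sum_j y_jf_j$, i.e., $\phi$ is dominant (surjective with generic fiber a torus coming from rescalings of the $f_j$). Therefore $\phi^{-1}(C_i)$ is automatically a proper closed subset whenever $C_i$ is, and there is no additional genericity statement to prove. This is precisely what the Remark preceding Theorem~\ref{NL} is tacitly using, and it closes the gap you flagged without any appeal to a variation-of-Hodge-structure argument or to exhibiting a special $(f_1,\dots,f_s)$.

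With those two corrections the argument is complete and coincides with the paper's.
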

So we get a natural generalization of the Noether-Lefschetz loci. 

\begin{dfn}\label{newNL} The Noether-Lefschetz locus $NL_{\beta_1,\dots,\beta_s}$ of   quasi-smooth intersection varieties is the locus of $s-$tuples $(f_1,\dots,f_s)$ such that $X=X_{f_1}\cap \dots X_{f_s}$ is quasi-smooth intersection with $f_i\in|\beta_i|$  and   $H^{k+1-s,k+1-s}(X,\Q)\neq i^*\left( H^{k+1-s,k+1-s}(\Pj^d_{\Sigma},\Q)\right)$.
\end{dfn}

Now we  consider the  Hodge conjecture  for very general  quasi-smooth intersection subvarieties in $\Pj^d_{\Sigma}$.

\begin{cor}\label{hodge1} If  $\Pj_{\Sigma}^d$ is a Oda projective simplicial toric variety,
the Hodge Conjecture holds for a very general  quasi-smooth intersection subvariety $X$ cut off by $f_1,\dots f_s$ such that $d+s=2(k+1)$ and  $\sum_{i=1}^s \deg (f_i)- \beta_0$ is nef.\end{cor}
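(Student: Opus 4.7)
Since $\dim X = d - s = 2(k+1-s)$, the rational Hodge classes on $X$ live in the groups $H^{p,p}(X,\Q)$ for $p = 0, \dots, d-s$, with $p = k+1-s$ the middle degree. The strategy is to reduce each $(p,p)$-class on $X$ to a Hodge class on $\Pj^d_{\Sigma}$, and then invoke the Hodge Conjecture on the ambient toric variety (classical for projective simplicial toric varieties, whose rational cohomology is generated by torus-invariant subvarieties, as recalled in the introduction).

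For non-middle bidegrees $p \neq k+1-s$ the plan is to combine Mavlyutov's Lefschetz theorem with hard Lefschetz on $X$. When $2p < d-s$, the restriction $i^*\colon H^{2p}(\Pj^d_{\Sigma},\Q) \to H^{2p}(X,\Q)$ is a morphism of Hodge structures and an isomorphism by Mavlyutov's Proposition 1.4 recalled above, so every $(p,p)$-class on $X$ in this range pulls back from $\Pj^d_{\Sigma}$ and is therefore algebraic. When $2p > d-s$, I would exploit the fact that a quasi-smooth intersection in a simplicial projective toric variety is a rational homology manifold, so hard Lefschetz holds over $\Q$ for $X$ with the class $\omega = i^* h$ where $h$ is an ample class on $\Pj^d_{\Sigma}$. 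Applying hard Lefschetz with exponent $2p-(d-s)$ and then Mavlyutov's isomorphism in the dual degree $2(d-s-p) < d-s$, every Hodge class in $H^{p,p}(X,\Q)$ can be written as $\omega^{2p-(d-s)} \cup i^*\xi = i^*\bigl(h^{2p-(d-s)}\cdot\xi\bigr)$ for some Hodge class $\xi\in H^{d-s-p,d-s-p}(\Pj^d_{\Sigma},\Q)$, which is again algebraic by the toric case.

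For the middle bidegree $p = k+1-s$, the plan is simply to invoke Theorem \ref{NL}, which for very general $X$ in the prescribed family gives the equality $H^{k+1-s,k+1-s}(X,\Q) = i^* H^{k+1-s,k+1-s}(\Pj^d_{\Sigma},\Q)$; the right-hand side is once more spanned by restrictions of algebraic cycles from the toric variety. Assembling the three ranges yields the corollary. The substantive obstacle has already been absorbed into Theorem \ref{NL}; the present corollary is essentially a bookkeeping step, the only nontrivial verification being that the auxiliary tools (Mavlyutov's Lefschetz and rational hard Lefschetz) apply to quasi-smooth intersections, which follows from their $\Q$-homology manifold structure inside the simplicial toric ambient.
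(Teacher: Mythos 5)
Your proposal is correct and follows essentially the same line as the paper: the middle degree $p=k+1-s$ is handled by the Noether--Lefschetz statement for quasi-smooth intersections (you cite Theorem~\ref{NL} directly, while the paper's proof re-derives it inline via the Cayley trick, Ikeda's result that $\Pj^{2k+1}_{X,\Sigma}$ is Oda, and Proposition~\ref{prop}), and the non-middle degrees are disposed of by Mavlyutov's Lefschetz theorem together with rational hard Lefschetz on the $\Q$-homology manifold $X$, which the paper records in the remark following Mavlyutov's proposition rather than repeating in the corollary's proof. The two arguments differ only in how explicitly they spell out the routine bookkeeping.
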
 

\begin{proof} First note that by Thereom 4.1 in \cite{Ikeda} the projective simplicial toric variety $\Pj_{X,\Sigma}^{2k+1}$ is Oda and since $X$ is very general the quasi-smooth hypersurface $Y$ is very general as well. So applying the Noether-Lefschetz theorem  one has that $h^{k,k}_{\mbox{\rm\footnotesize prim}}(Y)=0= h^{k+1-s,k+1-s}_{\mbox{\rm\footnotesize prim}}(X)$ or equivalently every $(k+1-s,k+1-s)$ cohomology class is a linear combination of algebraic cycles. 
\end{proof}

\section{Cox-Gorenstein ideals} 
We shall need a partial generalization of Macaulay's theorem (see e.g.~Thm.~6.19 in \cite{v2} for the classical theorem). This generalization is basically contained
in the work of Cox and Cattani-Cox-Dickenstein \cite{cox-res,ccd}.

 Let $S$ be the Cox ring of a complete simplicial toric variety $\Pj_\Sigma$. This is graded over the effective classes in the class group $\operatorname{Cl}(\Pj_\Sigma)$ and \cite{CoxHom}
 $$ S^\alpha \simeq H^0(\Pj_\Sigma,\cO_{\Pj_\Sigma}(\alpha)).$$
 As $\cO_{\Pj_\Sigma}(\alpha)$ is coherent and $\Pj_\Sigma$ is complete,  each $S^\alpha$ is finite-dimensional over $\C$; in particular, $S^0\simeq \C$. 

  \begin{lma} For every effective $N\in\operatorname{Cl}(\Pj_\Sigma)$, the set of classes $\alpha\in\operatorname{Cl}(\Pj_\Sigma)$ such that $N-\alpha$ is effective
 is finite. \label{finite}
 \end{lma}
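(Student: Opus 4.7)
I read the statement as implicitly requiring $\alpha$ to be effective as well; without this, already in $\Pj^1$ the set $\{\alpha\in\Z:1-\alpha\geq 0\}$ is infinite, and more generally $\{\alpha:N-\alpha\text{ effective}\}=N-\{\beta\text{ effective}\}$ exhausts $\operatorname{Cl}(\Pj_\Sigma)$ because every toric divisor class is a difference of two effective classes. Equivalently, the claim is then that $N$ admits only finitely many decompositions as a sum of two effective classes.

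The plan is to pass to monomial representatives in the Cox ring. Let $D_1,\dots,D_r$ be the torus-invariant prime divisors and identify the monomial $x^a=\prod_i x_i^{a_i}$ in $S=\C[x_1,\dots,x_r]$ with its class $[\sum_i a_i D_i]\in\operatorname{Cl}(\Pj_\Sigma)$. Every effective class admits such a monomial representative, since $S^\alpha\neq 0$ for $\alpha$ effective and any nonzero element is a sum of monomials. Hence a pair $(\alpha,N-\alpha)$ of effective classes summing to $N$ yields a decomposition $(a_i)+(b_i)=(c_i)$ in $\Z^r_{\geq 0}$ where $(c_i)$ is a non-negative integer lift of $N$. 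It therefore suffices to bound (i) the number of such lifts $(c_i)$ of $N$, and (ii) for each fixed $(c_i)$, the number of splittings $(c_i)=(a_i)+(b_i)$ with $a_i,b_i\in\Z_{\geq 0}$.

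Count (ii) is trivially $\prod_i(c_i+1)<\infty$, so everything reduces to (i). Distinct non-negative integer lifts of $N$ give rise to distinct monomials of degree $N$ in $S$, and these are $\C$-linearly independent, so the set of lifts injects into $S^N$. The excerpt has already observed that $S^N\simeq H^0(\Pj_\Sigma,\cO_{\Pj_\Sigma}(N))$ is finite-dimensional (since $\Pj_\Sigma$ is complete and $\cO_{\Pj_\Sigma}(N)$ coherent), which forces (i) to be finite and completes the proof. The only delicate step is (i); the above short-circuits it by invoking finite-dimensionality of $S^N$, but a direct argument would identify the lifts of $N$ with lattice points of the polytope $P=\{m\in M_\R:\langle m,\rho_i\rangle\geq -c_i^{(0)}\text{ for all }i\}$ (for any one lift $c^{(0)}$), and boundedness of $P$—the real obstacle—is precisely where completeness of $\Pj_\Sigma$ enters, since the rays $\rho_i$ positively span the ambient vector space.
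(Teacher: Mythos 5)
Your proof is correct (granting the corrected hypothesis that $\alpha$ itself be effective, which you rightly observe is tacitly needed: if $\alpha$ ranges over all of $\operatorname{Cl}(\Pj_\Sigma)$ the set in question is typically infinite, already for $\Pj^1$). It is also genuinely different from the paper's argument. The paper splits the exact sequence
$0\to M\to \operatorname{Div}_{\mathbb T}(\Pj_\Sigma)\to\operatorname{Cl}(\Pj_\Sigma)\to 0$
and asserts that $\operatorname{Cl}(\Pj_\Sigma)$ can be identified with a free subgroup of $\operatorname{Div}_{\mathbb T}(\Pj_\Sigma)$ generated by \emph{a subset of the $\mathbb T$-invariant divisors}, in such a way that a class is effective precisely when its coordinates in that basis are nonnegative; the finiteness is then read off coordinatewise. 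That argument is delicate: for a weighted projective space such as $\Pj(2,3,5)$ the prime torus-invariant divisors have classes $2,3,5\in\operatorname{Cl}=\Z$, no proper subset of which generates $\operatorname{Cl}$, and the effective monoid $\{0,2,3,4,\dots\}$ is not free, so the ``nonnegative coordinates'' characterization cannot be taken at face value. Your approach sidesteps all of this: rather than trying to build a splitting adapted to the effective cone, you lift to the free monoid $\Z^r_{\geq 0}$ of torus-invariant effective divisors (i.e.\ to monomials in the Cox ring) using \emph{all} the $D_\rho$'s, reduce the problem to counting nonnegative lifts of $N$ and their splittings, and finish either by invoking $\dim_\C S^N<\infty$ (already recorded just above the lemma) or by the polytope argument you sketch. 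This is more robust, and cleanly isolates where completeness of $\Pj_\Sigma$ actually enters (finite-dimensionality of $H^0$, equivalently boundedness of the divisor polytope). Both counts in your reduction are handled correctly, and the injection from pairs $(a,b)\in\Z^r_{\geq 0}\times\Z^r_{\geq 0}$ back to $\alpha=\bigl[\sum_i a_iD_i\bigr]$ makes the finiteness of the set of $\alpha$'s immediate.
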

 
 \begin{proof} Since the torsion submodule of $\operatorname{Cl}(\Pj_\Sigma)$ is finite, we may assume  that $\operatorname{Cl}(\Pj_\Sigma)$ is free.
 Then the exact sequence
 $$ 0 \to  M \to \operatorname{Div}_{\mathbb T}(\Pj_\Sigma) \to \operatorname{Cl}(\Pj_\Sigma) \to 0 $$
 splits, and we may identify $\operatorname{Cl}(\Pj_\Sigma)$ with a free subgroup of $ \operatorname{Div}_{\mathbb T}(\Pj_\Sigma)$, generated by a subset
 $\{D_1,\dots,D_r\}$ of $\mathbb T$-invariant divisors. A class in $\operatorname{Cl}(\Pj_\Sigma)$ is effective if and only its coefficients on this basis
 are nonnegative, whence the claim follows.
 \end{proof}

 We shall give a definition of {\em Cox-Gorenstein ideal} of the Cox rings which generalizes to toric varieties the definition
 given by Otwinowska in \cite{Annia} for projective spaces. 
Let  $B\subset S$ be the  irrelevant ideal, and for a graded ideal $I\subset B$,
denote by $V_{\mathbb T}(I)$ the corresponding closed subscheme of $\Pj_\Sigma$.
 
 \begin{dfn} \label{lambdator} 
 A graded ideal $I$ of $S$ contained in $B$ is said to be
 a Cox-Gorentstein ideal of socle degree  $N\in\operatorname{Cl}(\Pj_\Sigma)$  if
 \begin{enumerate}\itemsep=-2pt
\item there exists a $\C$-linear form
$\Lambda\in (S^N)^\vee$ such that for all $\alpha \in\operatorname{Cl}(\Pj_\Sigma)$
\begin{equation}\label{eqlambda} I^\alpha  =\{f\in S^\alpha \,\vert\, \Lambda(fg) = 0 \ \mbox{for all} \ g\in S^{N-\alpha }\}; \end{equation}
\item $V_{\mathbb T}(I)=\emptyset$.
\end{enumerate} 
\end{dfn}

\begin{rmk}  Cox-Gorenstein ideals need not be Artinian. Property 2 in this definition replaces that condition. 
\end{rmk}

\begin{prop}
Let $R=S/I$. If $I$ is Cox-Gorenstein then  \begin{enumerate}\itemsep=-2pt
\item $\dim_\C R^N = 1$; \item
the natural bilinear morphism \begin{equation}\label{pair}R^\alpha \times R^{N-\alpha } \to R^N\simeq \C \end{equation} is nondegenerate
whenever $\alpha$ and $N-\alpha$ are effective.   \end{enumerate} 
\end{prop}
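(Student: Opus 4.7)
The plan is to unpack the definition of a Cox-Gorenstein ideal and read both claims directly off of \eqref{eqlambda}; the whole proposition is essentially a formal consequence of the definition once we verify that the linear form $\Lambda$ is nonzero.

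First I would argue that $\Lambda\neq 0$. Indeed, specializing \eqref{eqlambda} to $\alpha=0$ gives
\[ I^0 = \{c\in \C \,\vert\, \Lambda(cg)=0 \text{ for all } g\in S^N\},\]
so if $\Lambda$ vanished identically on $S^N$ we would have $I^0=\C$, i.e.\ $1\in I$, hence $I=S$. This contradicts the hypothesis $I\subset B$, since the irrelevant ideal $B$ is a proper ideal of $S$. Thus $\Lambda\neq 0$.

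For claim (1) I would specialize \eqref{eqlambda} to $\alpha=N$. Since $S^0=\C$ and multiplication by scalars is the identity, the condition $\Lambda(fg)=0$ for all $g\in S^0$ collapses to $\Lambda(f)=0$, so $I^N=\ker\Lambda$. Because $\Lambda$ is a nonzero linear form on the finite-dimensional space $S^N$ (finite-dimensionality is part of the setup recalled just before the lemma, and Lemma \ref{finite} guarantees the grading is well-behaved), its kernel has codimension one and $\Lambda$ descends to an isomorphism $R^N=S^N/\ker\Lambda \xrightarrow{\sim}\C$.

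For claim (2), the pairing \eqref{pair} is the one induced by multiplication in $S$ followed by the identification $R^N\simeq\C$ from step~(1); concretely it sends $([f],[g])$ to $\Lambda(fg)$, and this is well defined precisely because if $f\in I^\alpha$ or $g\in I^{N-\alpha}$ then $\Lambda(fg)=0$ by \eqref{eqlambda}. Suppose $[f]\in R^\alpha$ is in the left kernel: then $\Lambda(fg)=0$ for every $g\in S^{N-\alpha}$, so by \eqref{eqlambda} $f\in I^\alpha$ and $[f]=0$. The right-kernel argument is identical, applying \eqref{eqlambda} with the roles of $\alpha$ and $N-\alpha$ interchanged. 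Since $R^\alpha$ and $R^{N-\alpha}$ are finite-dimensional quotients of $S^\alpha$ and $S^{N-\alpha}$, the two-sided injectivity forces the pairing to be a perfect pairing, proving nondegeneracy.

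The only nontrivial point is the verification $\Lambda\neq 0$, which is where condition $I\subset B$ (as opposed to condition 2 on $V_{\mathbb T}(I)$) is used. Everything else is a direct manipulation of the defining identity \eqref{eqlambda}, with no need for Macaulay-type resultant arguments or the vanishing of $V_{\mathbb T}(I)$ at this stage.
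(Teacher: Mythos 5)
Your proof is correct and follows essentially the same path as the paper's: specialize \eqref{eqlambda} to $\alpha=N$ to get $I^N=\ker\Lambda$ for claim (1), and observe that the multiplication pairing is $([f],[g])\mapsto\Lambda(fg)$ and use \eqref{eqlambda} once more to kill the kernel for claim (2). What you add — and this is a genuine improvement — is the explicit verification that $\Lambda\neq 0$, via the $\alpha=0$ specialization and the hypothesis $I\subset B$. The paper's proof states that $0\to I^N\to S^N\xrightarrow{\Lambda}\C\to 0$ is exact, which silently asserts the surjectivity of $\Lambda$; your argument supplies the missing justification and correctly pinpoints $I\subset B$ (rather than $V_{\mathbb T}(I)=\emptyset$) as the hypothesis responsible for it. One very small caveat: Lemma \ref{finite} is not really what makes $S^N$ finite-dimensional; that follows from the coherence of $\cO_{\Pj_\Sigma}(N)$ and completeness of $\Pj_\Sigma$, as recalled just before the lemma, so the parenthetical reference is a bit off, but it does not affect the argument.
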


\begin{proof}  
1. From eq.~\eqref{eqlambda} we see that the sequence
$$ 0 \to I^N \to A^N \xrightarrow{\Lambda} \C \to 0 $$
is exact.

2. Define
$\Phi\colon R^\alpha\times R^{N-\alpha} \to \C$ as $\Phi(x,y) = \Lambda(\bar x\bar y)$, where $\bar x$, $\bar y$ are pre-images of
$x$, $y$ in $S$. One easily checks that this is well defined and that via the isomorphism $R^N\simeq k$ it coincides
with the pairing \eqref{pair}.  Now if $x\in R^\alpha$  and $\Phi(x,y) = 0$ for all $y\in R^{N-\alpha}$
then $\Lambda(\bar x\bar y) =0$ for all $\bar y\in S^{N-\alpha}$ so that $\bar x \in I^\alpha$, i.e., $x=0$. 
  \end{proof}

Let $f_0,\dots,f_d$ be homogeneous polynomials, $f_i\in S^{\alpha_i}$, where $d=\dim \Pj_\Sigma$ and each $\alpha_i$ is ample, and let $ N = \sum_i\alpha_i-\beta_0$, where
$\beta_0$ is the anticanonical class of $\Pj_\Sigma$. Assume that the $f_i$ have no common zeroes in $\Pj_\Sigma$, i.e., $V_{\mathbb T}(I)=\emptyset$ if
$I=(f_0,\dots,f_d)$. 

In \cite{bc,cox-res,ccd}  it is shown that  for each $G \in S^N$ one can define a meromorphic 
$d$-form $\xi_G$ on $\Pj_\Sigma$ by letting
$$ \xi_G = \frac{G\,\Omega}{f_0\cdots f_d}$$
where $\Omega$ is a Euler form on $\Pj_\Sigma$. The form $\xi_G$ determines a class in $H^d(\Pj_\Sigma,\omega)$,
where $\omega$ is the canonical sheaf of $\Pj_\Sigma$ (the sheaf of Zariski $d$-forms on $\Pj_\Sigma$), and in turn
the trace morphism  $\operatorname{Tr}_{\Pj_\Sigma}\colon H^d(\Pj_\Sigma,\omega)\to\C$ associates a complex number to $G$, so we can
define $\Lambda\in (S^N)^\vee$ as 
\begin{equation}\label{deflambdator} \Lambda(G) = \operatorname{Tr}_{\Pj_\Sigma}([\xi_G])\in \C.\end{equation}

Finally, we can prove a toric version of Macaulay's theorem.

\begin{thm} 
The linear map defined in Eq.~\eqref{deflambdator} 
satisfies the condition in Definition \ref{lambdator}. Therefore, the ideal $I=(f_0,\dots,f_d)$ is a Cox-Gorenstein ideal of socle degree $N$.
\label{toricmac}
\end{thm}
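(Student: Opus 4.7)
The plan is to verify the two conditions of Definition \ref{lambdator} for the ideal $I=(f_0,\dots,f_d)$ and the linear form $\Lambda$ from \eqref{deflambdator}. Condition (2), $V_{\mathbb T}(I)=\emptyset$, is precisely the standing hypothesis on the $f_i$, so the real content is condition (1), namely the identity
$$I^\alpha = \{f\in S^\alpha \,\vert\, \Lambda(fg) = 0 \text{ for all } g\in S^{N-\alpha}\}$$
for every $\alpha\in \operatorname{Cl}(\Pj_\Sigma)$. By Lemma \ref{finite} only finitely many $\alpha$ with both $\alpha$ and $N-\alpha$ effective are genuinely relevant, and I would fix such an $\alpha$ and establish the two inclusions separately.

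For ``$\subseteq$'' I would use a \v{C}ech cohomology argument on the cover $\mathcal U = \{U_j = \Pj_\Sigma \setminus V(f_j)\}_{j=0}^d$, which is an affine open cover of $\Pj_\Sigma$ since each $\alpha_j$ is ample and the $f_j$ have no common zero. If $f = \sum_j h_j f_j \in I^\alpha$, then for any $g \in S^{N-\alpha}$
$$\xi_{fg} = \sum_{j=0}^d \frac{h_j\, g\, \Omega}{f_0\cdots \widehat{f_j}\cdots f_d},$$
and the $j$-th summand is regular on the $d$-fold intersection $\bigcap_{k\neq j} U_k$. Hence $\xi_{fg}$ is the \v{C}ech coboundary of a $(d-1)$-cochain in $\mathcal U$, so $[\xi_{fg}]=0$ in $H^d(\Pj_\Sigma,\omega)$ and $\Lambda(fg) = \operatorname{Tr}_{\Pj_\Sigma}([\xi_{fg}]) = 0$.

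For the reverse inclusion ``$\supseteq$'' I would invoke the toric residue duality of Cox \cite{cox-res} and Cattani--Cox--Dickenstein \cite{ccd}. Under our hypotheses their work shows that the trace descends to an isomorphism $S^N/I^N \xrightarrow{\sim} \C$ and that the multiplication pairing
$$ S^\alpha/I^\alpha \otimes S^{N-\alpha}/I^{N-\alpha} \longrightarrow S^N/I^N \simeq \C $$
is perfect whenever $\alpha$ and $N-\alpha$ are both effective. Granting this, if $f\in S^\alpha$ satisfies $\Lambda(fg)=0$ for every $g\in S^{N-\alpha}$, then the class of $f$ in $S^\alpha/I^\alpha$ pairs trivially with all of $S^{N-\alpha}/I^{N-\alpha}$, hence vanishes, i.e., $f\in I^\alpha$.

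The principal obstacle is clearly this reverse inclusion. The forward direction is a formal \v{C}ech coboundary computation, and the non-triviality of $\Lambda$ on $S^N$ follows from the non-vanishing of the toric trace; but the perfectness of the multiplication pairing in complementary degrees is the deep toric analog of classical Macaulay duality, resting on the vanishing theorems and explicit global residue formulas of \cite{ccd}. It is there that the ampleness of the $\alpha_i$ is used in an essential way, and where the argument ceases to be a formal manipulation.
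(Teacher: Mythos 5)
Your proposal is essentially the same as the paper's argument, just with more of the scaffolding filled in. For the inclusion $I^\alpha\subseteq\{f:\Lambda(fg)=0\ \forall g\}$ the paper simply writes ``it is clear'', while you give the standard \v{C}ech justification: writing $f=\sum_j h_jf_j$ decomposes $\xi_{fg}$ into pieces each regular away from one $V(f_j)$, exhibiting it as a coboundary on the cover $\{U_j\}$. That is correct (one checks the degrees match: $h_jg\in S^{N-\alpha_j}$, so each summand is a bona fide degree-zero meromorphic form). For the reverse inclusion the paper invokes Prop.~3.13 of \cite{ccd} to get $R^N\simeq\C$ and then asserts that $fg\in I^N$ for all $g\in S^{N-\alpha}$ forces $f\in I^\alpha$; you correctly observe that the step actually needed here is the full perfectness of the multiplication pairing $R^\alpha\times R^{N-\alpha}\to R^N$ in complementary effective degrees, which is the toric analogue of Macaulay duality established in \cite{cox-res,ccd}. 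So you are a bit more explicit than the paper about what is being borrowed, and you identify the right place where the argument stops being formal: the ampleness hypothesis on the $\alpha_i$ enters through the vanishing theorems underlying the residue duality. Both proofs ultimately defer the same key fact to Cox and Cattani--Cox--Dickenstein; the difference is only in how much of the surrounding bookkeeping is written out.
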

\begin{proof} By Prop.~3.13 in \cite{ccd} the map $\Lambda$ establishes an isomorphism $R^N \simeq \C$. Hence,
if $f\in S^\alpha$ is such that $\Lambda(fg)=0$ for all $g\in S^{N-\alpha}$, then $fg\in I^N$, which implies $f\in I^\alpha$.
On the other hand, it is clear that  $\Lambda(fg)=0$ if $f\in I^\alpha$ and $g\in S^{N-\alpha}$.
\end{proof} 

Another example is given in terms of {\em toric Jacobian ideals.}
For every ray $\rho\in\Sigma(1)$ we shall denote by $v_\rho$ its rational generator,
and by $x_\rho$ the corresponding variable in the Cox ring. Recall that $d$ is the dimension of the toric variety $\Pj_\Sigma$, while we denote
by $r=\#\Sigma(1)$ the number of rays. 
Given $f\in S^\beta$ one defines its {\em toric Jacobian ideal}   as 
$$J_0(f) = \left( x_{\rho_1} \frac{\partial f }{\partial  x_{\rho_1}}, \dots,  x_{\rho_r} \frac{\partial f }{\partial  x_{\rho_r}} \right).$$

We recall from
\cite{bc} the   definition of nondegenerate hypersurface and some properties (Def.~4.13 and Prop.~4.15).

\begin{dfn} Let $f\in S(\Sigma)^\beta$, with $\beta$ an ample Cartier class. The associated hypersurface $X_f$ is nondegenerate if for all $\sigma\in\Sigma$ the affine hypersurface $X_f\cap O(\sigma)$ is a smooth codimension one subvariety of the orbit $O(\sigma)$ of the action of the torus
$\mathbb T^d$. 
\end{dfn}

\begin{prop} \begin{enumerate} \itemsep=-3pt\item Every nondegenerate hypersurface is quasi-smooth.
\item If $f$ is generic then $X_f$ is nondegenerate.
\end{enumerate} 
\end{prop}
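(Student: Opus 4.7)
For part (1), I would reduce quasi-smoothness to a Jacobian check on $U(\Sigma)\subset \mathbb A^n$, using the stratification $U(\Sigma) = \bigsqcup_{\sigma\in\Sigma} U_\sigma$ where $U_\sigma:=q^{-1}(O(\sigma))$ is cut out by $x_\rho=0$ for $\rho\in\sigma(1)$ (and $x_{\rho'}\neq 0$ for $\rho'\notin\sigma(1)$), with $q\colon U(\Sigma)\to \Pj_\Sigma$ the Cox geometric quotient by $G=\operatorname{Hom}(\operatorname{Cl}(\Pj_\Sigma),\C^*)$. The first step is to pick any $x\in V(f)\cap U(\Sigma)$, lying in some stratum $U_\sigma$. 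Because $q$ is $G$-equivariant and $V(f)$ is $G$-stable, one has $V(f)\cap U_\sigma = q^{-1}(X_f\cap O(\sigma))$. Nondegeneracy asserts that $X_f\cap O(\sigma)$ is smooth of codimension one in the smooth variety $O(\sigma)$; since $q|_{U_\sigma}$ is flat and smooth (a principal $G$-bundle up to the finite stabilizers coming from the orbifold structure of $\Pj_\Sigma$), this pulls back to smoothness of $V(f)\cap U_\sigma$ of codimension one in $U_\sigma$. Writing $\tilde f$ for the restriction of $f$ obtained by setting $x_\rho=0$ for $\rho\in\sigma(1)$, the Jacobian criterion on $U_\sigma$ then gives $\partial f/\partial x_{\rho'}(x)\neq 0$ for some $\rho'\notin\sigma(1)$. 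These partials are entries of the full Cox-Jacobian $\nabla f(x)$, so $\nabla f(x)\neq 0$, and $V(f)\cap U(\Sigma)$ is smooth of codimension one at $x$, which is exactly quasi-smoothness.

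For part (2), I would run a Bertini argument stratum by stratum. For each cone $\sigma$, consider the restriction map $S^\beta=H^0(\Pj_\Sigma,\cO(\beta))\to H^0(\overline{O(\sigma)},\cO(\beta)|_{\overline{O(\sigma)}})$. Since $\beta$ is an ample Cartier class and the orbit closure $\overline{O(\sigma)}$ is again a projective toric variety cut out by a monomial ideal, Serre vanishing applied to the ideal sheaf sequence of $\overline{O(\sigma)}$ gives surjectivity of this restriction, and ampleness of $\cO(\beta)|_{\overline{O(\sigma)}}$ makes the resulting linear series base-point-free on the smooth open orbit $O(\sigma)$. Classical Bertini then furnishes a nonempty Zariski-open subset $W_\sigma\subset |\beta|$ such that for every $f\in W_\sigma$ the cut $X_f\cap O(\sigma)$ is either empty or smooth of codimension one in $O(\sigma)$. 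Since $\Sigma$ is finite, the intersection $\bigcap_{\sigma\in\Sigma} W_\sigma$ is a nonempty Zariski-open subset of $|\beta|$, and every $f$ in it defines a nondegenerate $X_f$.

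The main obstacle is the transfer of smoothness across $q|_{U_\sigma}$ in part (1): in the simplicial but non-smooth case the finite stabilizers prevent $q$ from being an honest principal bundle, and one has to argue that it is nevertheless smooth enough to guarantee that smoothness of the zero locus of a section of an ample line bundle pulls back correctly. One can sidestep this by computing Jacobians directly in Cox coordinates and using the orbit-cone correspondence to identify $\tilde f$, up to a torus factor, with a local defining equation of $X_f\cap O(\sigma)$, so that smoothness in $O(\sigma)$ is exactly the nonvanishing of $\nabla \tilde f$ on $V(\tilde f)\cap U_\sigma$. In part (2) the only subtle point is the surjectivity of restriction onto each orbit closure, where the ample Cartier hypothesis on $\beta$ does the essential work.
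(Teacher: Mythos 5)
The paper itself does not prove this proposition: it is recalled directly from Batyrev--Cox \cite{bc} (their Def.\ 4.13 and Prop.\ 4.15), so there is no in-text proof to compare yours against. That said, your outline follows what is essentially the standard argument one would find in \cite{bc}, and part (1) is in substance correct: on the stratum $U_\sigma$, the only potential stabilizer of a point is $G\cap(\C^*)^{\sigma(1)}$, which is \emph{constant} along the stratum, so $q|_{U_\sigma}$ is in fact a principal bundle for the quotient group and hence a smooth morphism; alternatively, as you note, one computes directly that $\partial f/\partial x_{\rho'}(x)=\partial\tilde f/\partial x_{\rho'}(x)$ for $\rho'\notin\sigma(1)$ and deduces $\nabla f(x)\neq 0$. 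Either route closes the ``finite stabilizer'' worry you raise.

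Part (2) has a genuine gap in the justification of base-point-freeness on each orbit. You invoke Serre vanishing on the ideal sheaf sequence of $\overline{O(\sigma)}$ to get surjectivity of the restriction $S^\beta\to H^0(\overline{O(\sigma)},\cO(\beta)|_{\overline{O(\sigma)}})$, but Serre vanishing concerns sufficiently large twists and does not apply to a fixed $\beta$; and the subsequent step ``surjectivity together with ampleness of the restriction makes the linear series base-point-free'' is a non sequitur, since ample does not imply globally generated in general. The clean toric fix is this: $\beta$ ample Cartier on the complete toric variety $\Pj_\Sigma$ is globally generated (a nef Cartier class on a complete toric variety is globally generated), so the image of $S^\beta$ in $H^0(\overline{O(\sigma)},\cO(\beta)|_{\overline{O(\sigma)}})$ already generates $\cO(\beta)|_{\overline{O(\sigma)}}$, in particular is base-point-free on the smooth orbit $O(\sigma)$; one does not even need surjectivity of the restriction, although that too holds by the toric description of global sections as lattice points of $P_\beta$ and its faces. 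With this replacement your stratum-by-stratum Bertini argument then goes through, and since $\Sigma$ is finite the generic $f$ is nondegenerate, as claimed.
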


The  following is part of Prop.~5.3 in \cite{cox-res}, with some changes in the terminology.

\begin{prop} Let $f\in S(\Sigma)^\beta$, and let $\{\rho_1,\dots,\rho_d\}\subset\Sigma(1)$ be such that
$v_{\rho_1},\dots,v_{\rho_d}$ are linearly independent.  \begin{enumerate}  \item The toric Jacobian ideal of $f$ coincides with the ideal 
$$ \left( f, x_{\rho_1} \frac{\partial f }{\partial  x_{\rho_1}}, \dots,  x_{\rho_d} \frac{\partial f }{\partial  x_{\rho_d}} \right).$$
\item The following conditions are equivalent:
\begin{enumerate} \item $f$ is nondegenerate; \item the polynomials $x_{\rho_i} \frac{\partial f }{\partial  x_{\rho_i}}$, $i=1,\dots,r$, do not vanish simultaneously on $X_f$; \item
the polynomials  $f$ and $x_{\rho_i} \frac{\partial f }{\partial  x_{\rho_i}}$, $i=1,\dots,d$, do not vanish simultaneously on $X_f$.
\end{enumerate}
\item If moreover $\beta$ is ample and $f$ is nondegenerate, then $J_0(f)$ is a   Cox-Gorenstein ideal of socle degree
$N = (d+1)\beta-\beta_0$, where $\beta_0$ is the anticanonical class of $\Pj_\Sigma^d$.
\end{enumerate}
\end{prop}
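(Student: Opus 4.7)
The backbone of the proof is the family of Cox-ring Euler identities: for every $(a_\rho)\in\operatorname{Cl}^*:=\ker(\Z^{\Sigma(1)}\to N)$, i.e.\ every integer vector with $\sum_\rho a_\rho v_\rho=0$, the vector field $\sum_\rho a_\rho x_\rho\partial/\partial x_\rho$ lies in the Lie algebra of the subtorus $T_{\operatorname{Cl}}\subset(\C^*)^{\Sigma(1)}$, and since $f\in S^\beta$ is a $\beta$-semi-invariant, differentiating the action yields
$$\sum_{\rho\in\Sigma(1)}a_\rho\,x_\rho\frac{\partial f}{\partial x_\rho}=\langle\beta,(a_\rho)\rangle\,f,$$
where $\langle\cdot,\cdot\rangle$ is the natural pairing between $\operatorname{Cl}$ and $\operatorname{Cl}^*$. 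The lattice $\operatorname{Cl}^*$ has rank $r-d$.

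For Part (1), the inclusion $f\in J_0(f)$ follows by picking $(a_\rho)\in\operatorname{Cl}^*$ with $\langle\beta,(a_\rho)\rangle\neq 0$; such an $(a_\rho)$ exists because $\beta\neq 0$ and the pairing is perfect over $\Q$. For the reverse inclusion I restrict the projection $\Z^{\Sigma(1)}\to\Z^{r-d}$ onto the coordinates indexed by $\rho_{d+1},\dots,\rho_r$ to $\operatorname{Cl}^*$; its kernel consists of $(a_1,\dots,a_d,0,\dots,0)$ with $\sum_{i=1}^d a_i v_{\rho_i}=0$, which is trivial by the hypothesis that $v_{\rho_1},\dots,v_{\rho_d}$ are linearly independent. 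Thus the restricted projection is injective and, by the rank count, an isomorphism after $\otimes\Q$; inverting this linear system in the $r-d$ Euler identities expresses each $x_{\rho_j}\partial f/\partial x_{\rho_j}$ for $j>d$ as a $\C$-linear combination of $f$ and the $x_{\rho_i}\partial f/\partial x_{\rho_i}$ for $i\le d$.

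For Part (2) the implication (b)$\Rightarrow$(c) is trivial, and (c)$\Rightarrow$(b) is immediate from Part (1): on $X_f$ one has $f=0$, so the relations just obtained show that simultaneous vanishing of the generators in (c) at a point of $X_f$ forces the vanishing of every $x_\rho\partial f/\partial x_\rho$. The delicate step is (a)$\Leftrightarrow$(b), which I would argue orbit by orbit using the stratification $\Pj_\Sigma^d=\bigsqcup_\sigma O(\sigma)$: on $O(\sigma)$ the coordinates $x_\rho$ for $\rho\in\sigma(1)$ vanish (so the corresponding $x_\rho\partial f/\partial x_\rho$ vanish automatically) while the remaining $x_\rho$'s are invertible, and the Jacobian criterion for smoothness of $X_f\cap O(\sigma)$ then translates directly into non-simultaneous vanishing of $\{x_\rho\partial f/\partial x_\rho\}_{\rho\in\Sigma(1)}$ on that stratum. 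Gluing over $\sigma$ yields (a)$\Leftrightarrow$(b). This orbit-by-orbit reconciliation of the Cox construction with the torus-orbit stratification is the principal technical obstacle.

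For Part (3), given $\beta$ ample and $f$ nondegenerate, condition (c) says $f,x_{\rho_1}\partial f/\partial x_{\rho_1},\dots,x_{\rho_d}\partial f/\partial x_{\rho_d}$ do not vanish simultaneously on $X_f$; but any common zero of these $d+1$ polynomials on $\Pj_\Sigma^d$ must lie on $X_f$ (by the first generator), so they have no common zero at all. All $d+1$ polynomials have the same ample degree $\beta$, so Theorem~\ref{toricmac} presents the ideal $(f,x_{\rho_1}\partial f/\partial x_{\rho_1},\dots,x_{\rho_d}\partial f/\partial x_{\rho_d})$ as Cox-Gorenstein of socle degree $(d+1)\beta-\beta_0$, and Part (1) identifies this with $J_0(f)$.
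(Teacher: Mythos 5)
The paper does not prove this proposition at all: it simply states ``The following is part of Prop.~5.3 in \cite{cox-res}, with some changes in the terminology'' and defers entirely to Cox's \emph{Toric residues}. So there is no paper-internal argument to compare against; what you have produced is a reconstruction of (part of) Cox's own proof, and on the whole it reconstructs it faithfully. The Euler-identity backbone for Part~(1), including the rank count showing that the restricted projection $\operatorname{Cl}^*\otimes\Q\to\Q^{r-d}$ onto the coordinates indexed by $\rho_{d+1},\dots,\rho_r$ is an isomorphism, is the right argument (with the harmless proviso that you tacitly assume $\beta\neq 0$, which is automatic in the applications where $\beta$ is ample). Part~(3) is correct and is exactly how Theorem~\ref{toricmac} is meant to be used: the $d+1$ degree-$\beta$ polynomials $f,\,x_{\rho_1}\partial f/\partial x_{\rho_1},\dots,x_{\rho_d}\partial f/\partial x_{\rho_d}$ have no common zero (any common zero would lie on $X_f$, contradicting~(c)), so the ideal they generate --- which by Part~(1) is $J_0(f)$ --- is Cox-Gorenstein of socle degree $(d+1)\beta-\beta_0$.

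The genuine gap is where you yourself locate it: the equivalence (a)$\Leftrightarrow$(b) is only sketched, and the phrase ``translates directly'' conceals precisely the work that needs doing. Concretely, $O(\sigma)$ is a quotient of the locally closed subset $\pi^{-1}(O(\sigma))=\{x_\rho=0\ \text{for}\ \rho\in\sigma(1),\ x_\rho\neq0\ \text{for}\ \rho\notin\sigma(1)\}$ of $U(\Sigma)$ by the group $G=\operatorname{Hom}(\operatorname{Cl}(\Pj_\Sigma),\C^*)$, so the Jacobian criterion for smoothness of $X_f\cap O(\sigma)$ must be formulated on $\pi^{-1}(O(\sigma))$ modulo the $G$-orbit directions, not simply read off from the Cox partials. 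The reconciliation does go through, but only because at any lift $\tilde p$ of a point of $X_f\cap O(\sigma)$ the differential $df(\tilde p)$ automatically annihilates the $G$-tangent directions (these are the vector fields $\sum_\rho a_\rho x_\rho\partial_\rho$, $(a_\rho)\in\operatorname{Cl}^*$, on which $df$ evaluates to $\langle\beta,a\rangle\,f(\tilde p)=0$); hence the rank condition reduces to nonvanishing of some $\partial f/\partial x_\rho(\tilde p)$ with $\rho\notin\sigma(1)$, which, since $x_\rho(\tilde p)\neq0$ for $\rho\notin\sigma(1)$ and $x_\rho(\tilde p)=0$ for $\rho\in\sigma(1)$, is the same as nonvanishing of some $x_\rho\partial f/\partial x_\rho(\tilde p)$. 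You should also handle the case $O(\sigma)\subset X_f$ (so $X_f\cap O(\sigma)$ fails to have codimension one), where one checks that $f\equiv0$ on $\pi^{-1}(O(\sigma))$ forces all $x_\rho\partial f/\partial x_\rho$ to vanish there too. Supplying this paragraph would turn your sketch into a complete proof; as written it names the obstacle without surmounting it.
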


\section{Asymptotic Hodge conjecture}

Let us recall  part of the notation and assumptions  of \cite{bm}. Let $\Pj^{2k+1}_\Sigma$ be an Oda variety with an ample Cartier class $\beta$ such that $k\beta-\beta_0=n\eta$, where
$\beta_0$ is the anticanonical class, $\eta$ is a primitive ample class and $n\in\N$.  
Let $X_f\in\vert\beta\vert$  be  a quasi-smooth hypersurface in the Noether-Lefschetz locus associated to a nontrival Hodge class $\lambda\in F^k\mathcal H^{2k}(U)$.
Let $r$ be number of rays of $\Sigma$, so that $r\ge 2(k+1)$. Assuming that $n$ is big enough, it follows from Proposition 4.7 or Theorem 6.1 in \cite{bm}
that there exists a $k$-dimensional subvariety $V$ of $X_f$  satisfying the following conditions:
\begin{itemize}
	 	\item $\deg V\leq \min\{ 2\delta m_\beta,d\}$  with $0< \delta< \frac{1}{4(r-(k+1))}$ (the number $m_\beta$ was defined in Eq.~\eqref{mbeta});
	\item the graded ideals $I_V$ and 
	\begin{equation}\label{E} E=\{g\in S^{\bullet} \mid  \sum_{i=1}^b \lambda_i \int_{\operatorname{Tub}\gamma_i}  \frac{gh\Omega_0}{f^{k+1}}=0 
	\ \mbox{for all}\ h\in S^{N-\bullet} \} ,\end{equation}
 coincide in degree less than or equal to $\left( m_{\beta}-2-(r-j)\deg V\right)\eta $ for some $j$, with  $0<j<r$.
 Here $\operatorname{Tub}(-)$ is the adjoint of the residue map, and $N=(k+1)\beta-\beta_0$ is the socle degre of the  {\em Cox-Gorenstein ideal} $E$, while
 $$\lambda_f =\left(\sum_{i=1}^b\,\lambda_i\gamma_i\right)^{pd}$$ is the Poincar\'e dual of some rational combination of the homology cycles $\gamma_i$ generating
 $H_{2k}(X_f,\Q)$. Moreover, via the isomorphism $T_fU\simeq S^{\beta}$, the degree $\beta$ summand $E^\beta$ of $E$ is identified with the tangent space
 $T_fN^{k,\beta}_{\lambda,U}$ to the Noether-Lefschetz locus, so that $E^\beta$ contains  the degree $\beta$ part $J(f)^\beta$ the Jacobian ideal of $f$.
\end{itemize}

\begin{lma} The toric Jacobian ideal $J_0(f)$ is contained in $E$.
\end{lma}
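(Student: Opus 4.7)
The plan is to prove the lemma in three stages. First I would verify that $E$ is a graded ideal of the Cox ring $S$; once this is established, it will suffice to show that each generator $g_\rho:=x_\rho\,\partial f/\partial x_\rho$ (for $\rho\in\Sigma(1)$) of $J_0(f)$ lies in $E$. The ideal property follows directly from the definition \eqref{E}: if $g\in E^\alpha$ and $p\in S^\gamma$, then for any $h'\in S^{N-\alpha-\gamma}$ one has $ph'\in S^{N-\alpha}$, so
$$\sum_{i=1}^b \lambda_i \int_{\operatorname{Tub}\gamma_i} \frac{(pg)h'\,\Omega_0}{f^{k+1}} \;=\; \sum_{i=1}^b \lambda_i \int_{\operatorname{Tub}\gamma_i} \frac{g(ph')\,\Omega_0}{f^{k+1}} \;=\; 0$$
by hypothesis on $g$. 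Thus we reduce to showing $g_\rho\in E$ for every ray $\rho$.

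Next, fix $g\in J_0(f)^\alpha$ and any $h\in S^{N-\alpha}$. The key input is the toric Griffiths--Dwork theorem of Batyrev--Cox (\cite[Thm.~10.13]{bc}; see also \cite{cox-res} and \cite{Mavlyutov}, already used in the proof of Proposition \ref{prop}): for a quasi-smooth $X_f$, the residue
$$ S^N\longrightarrow F^k H^{2k}(X_f,\C),\qquad G\longmapsto\res\!\left(\frac{G\,\Omega_0}{f^{k+1}}\right) $$
descends modulo $F^{k+1}H^{2k}$ to an isomorphism $R(f)^N\xrightarrow{\sim} H^{k,k}_{\mathrm{prim}}(X_f)$, where $R(f)=S/J_0(f)$. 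Since $gh\in J_0(f)^N$, it follows that the cohomology class $[\res(gh\,\Omega_0/f^{k+1})]$ lies in $F^{k+1}H^{2k}(X_f,\C)$.

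To conclude, I would combine this with the defining adjointness
$$ \int_{\operatorname{Tub}\gamma_i}\phi \;=\; (2\pi\sqrt{-1})\int_{\gamma_i}\res(\phi) $$
between the tube map and the residue, together with $\lambda_f=\bigl(\sum_i\lambda_i\gamma_i\bigr)^{pd}$, to rewrite the sum appearing in \eqref{E} (with $\phi=gh\,\Omega_0/f^{k+1}$) as
$$(2\pi\sqrt{-1})\,\bigl\langle \lambda_f,\,[\res(\phi)]\bigr\rangle_{X_f}.$$
Hodge orthogonality on the middle cohomology of the $2k$-fold $X_f$ makes a class in $H^{k,k}$ pair trivially with every $H^{p,q}$ for $(p,q)\neq(k,k)$, in particular with all of $F^{k+1}H^{2k}$; hence the pairing vanishes and $g\in E$. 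The principal obstacle is the middle stage: extracting from the cited literature, in the generality of an Oda toric ambient and at the precise degree $N=(k+1)\beta-\beta_0$, the statement that the toric Jacobian ring $R(f)$ computes $H^{k,k}_{\mathrm{prim}}(X_f)$, so that $J_0(f)^N$ is exactly what is killed at the top graded piece of the pole filtration.
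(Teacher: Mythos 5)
Your argument is correct, but it follows a genuinely different route from the paper's. The paper's proof is a two-line reduction: the toric Jacobian ideal $J_0(f)=\bigl(x_\rho\,\partial f/\partial x_\rho\bigr)_\rho$ is clearly contained in the ordinary Jacobian ideal $J(f)=\bigl(\partial f/\partial x_\rho\bigr)_\rho$; the paper has already recorded (just above the lemma) that under the identification $T_f U\simeq S^\beta$ one has $E^\beta = T_f N^{k,\beta}_{\lambda,U}\supset J(f)^\beta$; and since $J_0(f)$ is generated in the single degree $\beta$, the inclusion $J_0(f)^\beta\subset E^\beta$ together with $E$ being an ideal forces $J_0(f)\subset E$. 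So the paper outsources all the Hodge theory to the identification of $E^\beta$ with the tangent space to the Noether--Lefschetz locus and then uses a purely algebraic generation-in-one-degree argument. Your proof, by contrast, works directly with the defining integrals in Eq.~\eqref{E} in every degree $\alpha$ at once: you first check that $E$ is a graded ideal, then invoke the toric Griffiths--Dwork reduction (Batyrev--Cox) to show that $gh\in J_0(f)^N$ forces the residue of $gh\,\Omega_0/f^{k+1}$ into $F^{k+1}H^{2k}$, and finally kill the pairing against the $(k,k)$ class $\lambda_f$ by Hodge orthogonality. This is heavier machinery but is more self-contained — it does not rely on the previously stated fact $J(f)^\beta\subset E^\beta$ — and it actually establishes the inclusion degree by degree rather than via generation. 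One small inessential wobble: after announcing a reduction to the generators $g_\rho$, you immediately argue for arbitrary $g\in J_0(f)^\alpha$ (which is fine and in fact what you need); and the Jacobian ring appearing in the Batyrev--Cox residue isomorphism is $S/J_1(f)$ with $J_1(f)=\langle f,\,x_\rho\,\partial f/\partial x_\rho\rangle$ rather than $S/J_0(f)$ verbatim, but since $f\in J_0(f)$ when $\beta$ is ample (Euler relation) this does not affect your argument.
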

\begin{proof} $J_0(f)\subset J(f)$, so that $J_0(f)^\beta\subset J(f)^\beta\subset E^\beta$, and since $J_0(f)$ is generated in degree $\beta$, one has
$J_0(f)\subset E$. 
\end{proof}

We denote by  $\lambda_V$ the class of $V$ in $H^{k,k}_{\mbox{\rm\footnotesize prim}}(X_f,\Q)$.
 
\begin{thm}\label{hodge2} If $V$ is a smooth   intersection subvariety,   there exists $c\in \Q^*$ such that $\lambda_f=c\lambda_V$.

\end{thm}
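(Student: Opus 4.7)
The strategy is to realize $\lambda_V$ as a linear form $\Lambda_V \in (S^N)^\vee$, parallel to the form $\Lambda_f$ defined by $\lambda_f$, with $N = (k+1)\beta - \beta_0$, to show that both $\Lambda_V$ and $\Lambda_f$ define Cox--Gorenstein ideals of socle degree $N$, and to use the low-degree agreement from \cite{bm} to force those ideals to coincide.

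First, I would set
\[
\Lambda_V(g) = \int_V \operatorname{res}\!\left(\frac{g\,\Omega_0}{f^{k+1}}\right)\qquad (g\in S^N),
\]
in parallel with the integrals $\sum_i\lambda_i\int_{\operatorname{Tub}\gamma_i}$ appearing in the definition of $E$. Because $V$ is a smooth codimension-$(k+1)$ subvariety of $\Pj^{2k+1}_\Sigma$ lying inside $X_f$, this linear form is well defined, and its Poincar\'e dual is exactly $\lambda_V \in H^{k,k}_{\mathrm{prim}}(X_f,\Q)$. Let $E_V\subset S$ be the graded ideal associated to $\Lambda_V$ via \eqref{eqlambda}. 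The Lemma preceding the theorem, combined with nondegeneracy of $f$, gives $J_0(f)\subset E_V$ and $V_{\mathbb T}(E_V)=\emptyset$, so by Theorem \ref{toricmac} the ideal $E_V$ is Cox--Gorenstein of socle degree $N$; the same holds for $E$ by construction.

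Next I would compare $E$ and $E_V$. Since $V\subset X_f$, any $g\in I_V$ makes the residue integral over $V$ vanish, so $I_V\subset E_V$. The bullet-point hypothesis then gives $E^\alpha = I_V^\alpha \subset E_V^\alpha$ for every $\alpha\le (m_\beta -2 -(r-j)\deg V)\eta$. Under the asymptotic conditions ($m_\beta$ large, $\deg V\le 2\delta m_\beta$ with $\delta<\tfrac{1}{4(r-(k+1))}$), this range is wide enough that the nondegenerate Cox--Gorenstein pairings $R^\alpha\times R^{N-\alpha}\to R^N\simeq \C$ for $R=S/E$ and for $S/E_V$ produce the reverse inclusion in complementary degrees; since in a Cox--Gorenstein ring any nonzero graded ideal must hit the one-dimensional socle, this propagates degreewise agreement to the equality of ideals $E=E_V$.

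Once $E=E_V$, the defining socle forms $\Lambda_f$ and $\Lambda_V$ both span the one-dimensional dual $(S^N/E^N)^\vee\simeq\C$, so $\Lambda_V=c\,\Lambda_f$ for some $c\in\C^*$. The Griffiths--Dwork--Batyrev identification converts this into $\lambda_V = c\,\lambda_f$ in $H^{k,k}_{\mathrm{prim}}(X_f,\C)$, and rationality of both sides forces $c\in\Q^*$. The main obstacle is the propagation step: converting ``$E$ and $E_V$ agree in low degree'' into ``$E=E_V$ as ideals''. The Cox--Gorenstein duality of Section 3 is the right tool, but applying it requires careful bookkeeping between the range $(m_\beta -2 -(r-j)\deg V)\eta$ and the socle degree $N=(k+1)\beta -\beta_0$, and uses critically the asymptotic regime ($m_\beta$ large, $\deg V$ small) built into the hypothesis.
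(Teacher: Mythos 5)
Your overall framing is aligned with the paper: you realize $\lambda_V$ as a residue form $\Lambda_V\in(S^N)^\vee$, attach to it a Cox--Gorenstein ideal $E_V$ (the paper calls it $E_{\mathrm{alg}}$) of the same socle degree $N=(k+1)\beta-\beta_0$ as $E$, and aim to show $E=E_V$, after which one-dimensionality of $S^N/E^N$ forces $\Lambda_V=c\,\Lambda_f$ and hence $\lambda_f=c\,\lambda_V$. That part of the bookkeeping is fine. But there are two genuine gaps.

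First, you never show $\lambda_V\neq 0$. This is not cosmetic: if $\lambda_V=0$ then $\Lambda_V=0$, so $E_V=S$ is not a proper ideal and not Cox--Gorenstein, and the whole comparison collapses; moreover, the theorem asserts $c\in\Q^*$, not $c\in\Q$. The paper's Step I handles this with a self-intersection computation on $[V]^2_{X_f}$ via $c_k(N_{V/X_f})$, deriving a divisibility contradiction if $[V]$ were proportional to $c_1^k(\cO_{X_f}(\eta))$.

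Second, and more seriously, your propagation step does not work as sketched. From the bullet-point hypothesis you get $E^\alpha=I_V^\alpha\subseteq E_V^\alpha$ only for $\alpha\le (m_\beta-2-(r-j)\deg V)\eta$, which is far below the socle degree $N$. Cox--Gorenstein duality would indeed give you everything if you knew, say, $E^N\subseteq E_V^N$, since $I^\alpha=\{g: gS^{N-\alpha}\subseteq I^N\}$; but low-degree agreement gives no information about degree $N$, and there is no free ``propagation'' from one end of the gradation to the other. What the paper actually does (Steps II--III) is structural: writing $f=\sum_{i=1}^{k+1}A_iK_i$ with $A_i$ generators of $I_V$, it shows that the complete intersection ideal $(A_1,\dots,A_{k+1},K_1,\dots,K_{k+1})$ is Cox--Gorenstein of socle degree $N$, and then proves that \emph{any} Cox--Gorenstein ideal $\mathcal I$ of socle degree $N$ containing $I_V+J_0(f)$ must contain it; this is done via the matrix $M=[x_i\partial A_j/\partial x_i]$, its maximal minors $\det M_I$, the colon ideals $(\mathcal I:K_j)$, and a socle-degree comparison which, under $r\ge 2(k+1)$ and $\deg V\le 2\delta m_\beta$, yields the contradiction $\delta>\tfrac{1}{4(r-(k+1))}$ if some $K_j\notin\mathcal I$. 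Applied to both $E$ and $E_{\mathrm{alg}}$, this forces both to equal the same complete intersection ideal, hence each other. That matrix/colon-ideal/socle-degree argument is exactly the content your proposal leaves as ``careful bookkeeping''; without it the proof is not complete.
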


\begin{proof} We divide the proof in three steps. 

\textbf{Step I}: $\lambda_V \neq 0$.  
Since $V\subset X_f$ is a regular embedding we have   
$$
\begin{array}{ccl}
[V]^2_{X_f}&=&\int_{V} c_k(N_{V/X_f})\\
&=&\int_{V} c_k(N_{V/\Pj_{\Sigma}})/{c_k({N_{X_f/\Pj_{\Sigma}}}_{|V})}\\
&=& \deg  V \left( \mbox{coefficient of }\; t^k \ {\rm in}\; \frac {\prod_i (1+\deg (A_i)t)}{1+\deg (X_f)t} \right)
\end{array}
$$
By contradiction if $\deg (X_f)[V]=\deg  V \cdot  c_1^k (\cO_{X_f}(\eta))$ then $$\deg (X_f)^2[V]^2_{X_f}=(\deg V)^2 c_1^{2k}(\cO_{X_f}(\eta))=(\deg V)^2 \deg(X_f)$$ which implies that $\beta =\deg(X_f)$ is a proper divisor of $\deg V$, which is a contradiction, so that    Step I is proved.

\textbf{Step II}. Let $E_{\rm alg}$ and $E$ be 
 the Cox-Gorenstein ideal  associated to $\lambda_V$ and  $\lambda_f$, respectively, as in equation \eqref{E}.

 To prove the theorem  it is enough to show that $E=E_{\rm alg}$.  
Note that $I_V+J_0(f)$ is contained in $E$ and $E_{\rm alg}$. Moreover,  since $V\subset X_f$, and $f$ is quasi-smooth,
there exist $K_1,\dots K_{k+1}\in B$ such that 
$f=A_1K_1+\dots A_{k+1}K_{k+1}$ and 
$(A_1,\dots ,A_{k+1}, K_1,\dots K_{k+1})$ is a   Cox-Gorenstein ideal with socle degree $N$ ; this will follow from the next step,  which concludes the proof.

\textbf{Step III}. 
It is enough to show that every Cox-Gorenstein ideal $\mathcal{I}$ of socle degree $N$ containing $I_V+J_0(f)$ also contains $(A_1,\dots ,A_{k+1}, K_1,\dots K_{k+1})$. By assumption
$$\left(A_j,j\in\{1,\dots, k+1\}, \sum_{j=1}^{k+1}x_i\frac{\partial A_j}{\partial x_i}K_j, i\in {1,\dots, r} \right) \subset \mathcal{I}.$$ 
Let us see that $K_j\in \mathcal{I}$ for every $j\in\{1,\dots,k+1\}$. Let $M\in \operatorname{Mat}(r\times(k+1))$ be the matrix $[x_i\frac{\partial A_j}{\partial x_i}]$ and $K$ the column $(K_j)_{j\in\{1,\dots,k+1\}}$. Let $I\subset \{1,\dots r\}$ with cardinality $k+1$ and let $M_I$ be the matrix  obtained extracting the $i\in I$-arrows of $M$. We have that $\sum_{j=1}^{k+1}x_i\frac{\partial A_j}{\partial x_i}K_j=(MK)_i=(M_IK)_i$; multiplying by the adjoint of $M_I$ we get that $\det (M_I)K_j\in \mathcal{I}$   for all $j\in\{1,\dots k+1\}$. On one hand the ideal $(\mathcal{I}:K_j)$ contains the ideal 
$$\mathcal{J}=I_V+\left< \det M_I\,\vert\, I\subset \{1,\dots,r\},\ \# I = k+1\right>.$$
Since $V$ is a smooth complete intersection subvariety, it follows that $\mathcal J$ is base point free, and therefore it contains a complete intersection Cox-Gorenstein ideal $\mathcal J'$ 
by the toric Macaulay theorem, Theorem \ref{toricmac}. Since $\mathcal J$ is generated in degree less than or equal to $\deg V \cdot\eta^k$, we can take $\mathcal J'$ with the same property. It follows that
$$ soc(\mathcal J') \le 2(k+1)(\deg V)\eta - \beta_0 \le 2rm_\beta\delta\eta-\beta_0.$$

On the other hand if $K_j\nin \mathcal{I}$ then $(\mathcal{I}: K_j)$ contains a Cox-Gorenstein ideal with socle degree 
$$N-\deg K_j\geq N-\beta= k\beta-\beta_0; $$ then comparing the above two inequalities and keeping in mind that $r\geq 2(k+1)$, we get   $$\delta>\frac{1}{2r}>\frac{1}{4(r-(k+1))},$$ which is absurd.
\end{proof}

\bigskip

\frenchspacing


\begin{thebibliography}{10} 

\itemsep-1pt






\bibitem{bc}
D. A. Batyrev, V. V. Cox, 
{\em On the Hodge structure of projective hypersurfaces in toric varieties,}
Duke Math. J. {\bf 75} (1994) 293--338.

\bibitem{bg-nl} U. Bruzzo, A. Grassi, {\em The Noether-Lefschetz locus of surfaces in toric threefolds,}  Commun. Contemp. Math. No.~5 (2017) 1750070 (20 pages).


\bibitem{bg2}
---,
{\em On the Hodge conjecture for hypersurfaces in toric varieties,} Commun. Anal. Geom.
{\bf 28} (2020) 1773--1786.
%


\bibitem{bm}
U. Bruzzo, W.D. Montoya,
\newblock {\em An asymptotic description of the Noether-Lefschetz components in toric varieties,}
\newblock{\tt arXiv:1905.01570 [math.AG].}

\bibitem{bm1}
---,
\newblock {\em Codimension bounds for the Noether-Lefschetz components for toric varieties,}
\newblock{\tt arXiv:2001.01960 [math.AG].}

\bibitem{ccd}
E.  Cattani, D.  Cox, A. Dickenstein, 
{\em Residues in toric varieties,}
 Compositio Math. {\bf 108} (1997) 35--76.
 
 \bibitem{CoxHom} 
 D. A. Cox, {\em The homogeneous coordinate ring of a toric variety,}
 {\bf 4} (1995) pp.  17--50.
 
\bibitem{cox-res}
---,
\newblock {\em Toric residues,}
Ark. Mat. {\bf 75} (1996) 73--96.
 
\bibitem{Dan}
A. Dan,  {\em Noether-{L}efschetz locus and a special case of the
              variational {H}odge conjecture: using elementary techniques},
in ``Analytic and algebraic geometry,'' Hindustan Book Agency, New Delhi 2017, pp.~107--115. 

\bibitem{dolg}
I. Dolgachev, {\em Weighted projective varieties,} in Lecture Notes in Math. {\bf 956},
Springer-Verlag, Berlin (1982), pp. 34--71.
 
\bibitem{Ikeda}
A. Ikeda, 
{\em Subvarieties of generic hypersurfaces in a non-singular projective toric variety,}
Math. Z.  {\bf 263} (2009) 923--937.

 \bibitem{LiuZhuang}
 K. F. Liu, X. B. Zhuang, {\em Deformations of complex orbifolds and the period maps,}  
 Sci. China Math. 63 (2020),  pp. 83--100.
 
\bibitem{lefelem} 
T. Maeno, J. Watanabe, {\em Lefschetz elements of Artinian Gorenstein algebras and Hessians of
homogeneous polynomials,} Illinois J. Math. 53 (2009), pp. 591-603.

\bibitem{Mavlyutov}
A. Mavlyutov, {\em Cohomology of Complete Intersections In Toric Varieties,}
Pacific J. Math. {\bf 191} (1999) 133-144.

\bibitem{Annia} 
A. Otwinowska, 
{\em Composantes de petite codimension du lieu de Noether-Lefschetz: un argument asymptotique en faveur de la conjecture de Hodge,} 
J. Alg. Geom. {\bf 12} (2003) 307--320.

\bibitem{v1} 
C. Voisin, 
 {\em Hodge Theory and Complex Algebraic Geometry I},
Cambridge University Press, Cambridge, UK (2002).

\bibitem{v2} 
---, 
 {\em Hodge Theory and Complex Algebraic Geometry II},
Cambridge University Press, Cambridge, UK (2003).



 		



\end{thebibliography}
\end{document}